\newtheorem{lem}{Lemma}[section]
\newtheorem{prop}[lem]{Proposition}
\newtheorem{cor}[lem]{Corollary}
\newtheorem{rem}[lem]{Remark}
\newtheorem{ex}[lem]{Example}
\newtheorem{defn}[lem]{Definition}
\newtheorem*{mthmn}{Main Theorem.}
\newtheorem*{quest1}{Question 1.}
\newtheorem*{quest2}{Question 2.}
\newenvironment{proof}{\noindent\textit{Proof. }}{\hfill $\Box$\linebreak}
\DeclareMathSymbol{\ordinaryl}{\mathalpha}{letters}{`l}     
\newcommand{\D}{\displaystyle}
\newcommand{\BB}{\mathbb{B}}
\newcommand{\CC}{\mathbb{C}}
\newcommand{\NN}{\mathbb{N}}
\newcommand{\PP}{\mathbb{P}}
\newcommand{\RR}{\mathbb{R}}
\renewcommand{\SS}{\mathbb{S}}
\newcommand{\Ccal}{\mathcal{C}}
\newcommand{\Fcal}{\mathcal{F}}
\newcommand{\Mcal}{\mathcal{M}}
\newcommand{\Ncal}{\mathcal{N}}
\newcommand{\Pcal}{\mathcal{P}}
\newcommand{\nuc}{\mathfrak{n}}
\newcommand{\Repa}[1]{\operatorname{Re}(#1)}
\newcommand{\repa}[1]{\operatorname{Re}(#1)}
\newcommand{\impa}[1]{\operatorname{Im}(#1)}
\newcommand{\Int}{\hbox{Int}}
\newcommand{\eps}{\varepsilon}
\newcommand{\qand}{\quad \hbox{and} \quad}
\date{}
\author{Thomas Pawlaschyk and Nikolay Shcherbina}
\title{On compact sets possessing \textit{q}-convex functions}
\begin{document}

\maketitle

\begin{abstract}We show that there exists a $q$-convex function in a neighborhood of a compact set $K$ in a complex manifold $\Mcal$ if and only if the $q$-nucleus of this compact set is empty. The latter can be characterized as the maximal $q$-pseudoconcave subset of $K$, i.e., a subset of $K$ containing all other compact $q$-pseudoconcave subsets in $K$.
\end{abstract}

%
%
\renewcommand{\thefootnote}{}\footnote{2010 \textit{Mathematics Subject Classification.} Primary 32U05, 32F10; Secondary 32Q99.}

\section{Introduction}

Plurisubharmonic functions are crucial in the study of functions in several complex variables. They originate from (sub-)harmonic functions and were introduced by Lelong and Oka in the early 1940s. In the smooth case, these were generalized to $q$-convex functions by Rothstein~\cite{Ro} and Grauert~\cite{Gr} in the 1950s and, in the upper semi-continuous case, to $q$-plurisubhamonic functions by Hunt and Murray \cite{HM} in 1978. Andreotti and Grauert used them to study cohomology groups of $q$-convex complex manifolds and spaces in the 1960s (cf.~\cite{HL}).
Plurisubharmonicity is strongly related to Kobayashi hyperbolicity of bounded sets, since the existence of strictly plurisubharmonic functions guarantees that the Kobayashi pseudo-metric is actually hyperbolic, i.e., a metric~\cite{Koba}. For this reason, the second named author of this paper examined the question on the existence of strictly plurisubharmonic functions in a open neighborhood of a given compact set $K$ in complex manifolds~\cite{Shc21}. It turned out that their existence is closely related to the emptiness of the \emph{nucleus}, a subset of $K$ which is constituted by the union of all pseudoconcave sets in $K$.
It is natural to ask whether a similar result transfers to $q$-convexity. In this paper we give necessary and sufficient conditions on the existence of strictly $q$-convex functions (with corners) in a neighborhood of a given compact set $K$. For this, we introduce the notion of the \emph{$q$-nucleus} of $K$ which is eventually composed by all $q$-pseudoconcave subsets of $K$, i.e., complements of $q$-pseudoconvex sets in the sense of Rothstein~\cite{Ro}. Our main results states that there exists a $q$-convex function in a neighborhood of $K$ if and only if the $q$-nucleus of $K$ is empty (see Main Theorem).

\section{The \textit{q}-convexity in terms of spherical hats}

For $z\in\CC^n$ we write $\|z\|_\infty:=\max\{|z_j|:1\leq j\leq n\}$ and $\Delta^n:=\{z \in \CC^n : \|z\|_\infty <1\}$. Denote also the unit ball by $\BB_1^n(0):=\{z \in \CC^n : \|z\|<1\}$, where $\|\cdot\|$ is the Euclidean norm in $\CC^n$.

\begin{defn} Fix integer numbers $k,m \geq 1$ and let $0 < r,s < 1$. An Euclidean $(k,m)$-\emph{Hartogs figure} is a set of the form 
\[
H^{k,m}=H^{k,m}_{r,s}:=\{z \in \Delta^k\times\Delta^{m}: \|(z_1,\ldots,z_k)\|_\infty < r \ \mbox{or} \ \|(z_{k+1},\ldots,z_{k+m})\|_\infty > s\}.
\]
\end{defn}

In the following, $\Mcal^n=\Mcal$ always denotes a complex manifold of dimension $n \geq 2$ unless otherwise stated.

\begin{defn} Let $q \in \{1,\ldots,n-1\}$ be an integer number. An open set $\Omega$ in $\Mcal$ is \emph{$q$-pseudoconvex} (in $\Mcal$) if it admits the \emph{Kontinuit\"atssatz} with respect to $(n-q)$-polydiscs, i.e.~for every $(q,n-q)$-Hartogs figure $H^{q,n-q}$ and every injective holomorphic map $\Phi:\Delta^n\to\Mcal$ such that $\Phi(H^{q,n-q}) \subset \Omega$ we have $\Phi(\Delta^n) \subset \Omega$.
\end{defn}

\begin{rem} Notice that the above described $q$-pseudoconvexity is defined in the sense of Rothstein~\cite{Ro}. It is equivalent to the $(n{-}q{-}1)$-pseudoconvexity in the sense of S\l{}odkowski~\cite{Sl} (see~\cite{Paw} for a list of equivalent notions of $q$-pseudoconvexity). In the case $q=n-1$, the $(n-1)$-pseduoconvexity in the sense of Rothstein is simply the classical pseudoconvexity.
\end{rem}

We introduce another notion of generalized pseudoconvexity which is based on half spheres rather than Hartogs figures.

\begin{defn}\label{defn-qpsc-hats} Let $0 < r < 1$ be a real number and fix some integer number $1 \leq k \leq n$. We consider the \emph{spherical hat} defined as
\[
\SS_r^k := \{z=(z_1,\ldots,z_k) \in \CC^k : \|z\|=1 \ \hbox{and}\ \Repa{z_1} \geq r\}
\]
and the \emph{filled (spherical) hat}
\[
\widehat{\SS}_r^k := \{z=(z_1,\ldots,z_k) \in \CC^k : \|z\|\leq 1 \ \hbox{and}\ \Repa{z_1} \geq r\}.
\]
A pair of sets $(S^k,\widehat{S}^k)=(S, \widehat{S})$ in $\Mcal$ is said to be a \emph{spherical hat pair of order $k$} if there exist a real number $0<r<1$, a filled spherical hat $\widehat{\SS}_r^{k}$, an open neighborhood $U$ of $\widehat{\SS}_r^{k} \times \overline{\Delta^{n-k}}$ in $\CC^n$ and an injective holomorphic map $\Phi:U\to\Mcal$ such that 
\[
S=S^k:=\Phi(\SS_r^{k}\times\Delta^{n-k}) \qand \widehat{S}=\widehat{S}^k:=\Phi\big(\widehat{\SS}_r^{k}\times\Delta^{n-k}\big).
\] 
We also denote by $\Int(\widehat{S}^k)$ the image of the interior of $\widehat{\SS}_r^{k}\times\Delta^{n-k}$ by $\Phi$ and call it the \emph{filling of} $S^k$.
\end{defn}

Now we compare the two notions of $q$-pseudoconvexity defined above.

\begin{prop} \label{prop-qpsc-caps} Let $\Omega$ be an open set in $\Mcal$. Then the following properties are equivalent.

\begin{enumerate}

\item \label{prop-qpsc-caps-1}  $\Omega$ is $q$-pseudoconvex.

\item \label{prop-qpsc-caps-4} For every spherical hat pair $(S,\widehat{S})$ of order $n-q+1$ such that $\Omega$ contains the spherical hat $S=S^{n-q+1}$, the set $\Omega$ also contains the filled hat $\widehat{S}=\widehat{S}^{n-q+1}$.


\end{enumerate}

\end{prop}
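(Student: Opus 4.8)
The plan is to work in the local model provided by the chart $\Phi$ and to prove the two implications by matching the Hartogs figures defining $q$-pseudoconvexity with the spherical caps of order $n-q+1$. The bookkeeping to keep in mind is that the filling of a $(q,n-q)$-Hartogs figure proceeds along $(n-q)$-dimensional polydiscs parametrised by the $q$-dimensional base $\Delta^q$, whereas a filled hat $\widehat{\SS}_r^{n-q+1}\times\Delta^{q-1}$ is an $(n-q+1)$-dimensional ball-cap times a $(q-1)$-dimensional polydisc. The common feature, on which both directions rest, is that in either picture $\Omega$ is required to fill in $(n-q)$-dimensional analytic surfaces once their boundaries lie in $\Omega$.

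For the implication from property $(1)$ to property $(2)$ I would fix a spherical hat pair $(S,\widehat S)$ of order $n-q+1$ with $S\subset\Omega$ and, after transporting everything by $\Phi^{-1}$, write the cap coordinates as $z=(z_1,z')\in\CC\times\CC^{n-q}$ together with $w\in\Delta^{q-1}$. Slicing the filled cap by the affine spaces $\{z_1=c\}$ exhibits it as the union of the closed balls $B_{c,w}=\{(c,z',w):\|z'\|^2\le 1-|c|^2\}$, where $c$ runs over $\{\Repa{c}\ge r,\ |c|\le 1\}$ and $w$ over $\Delta^{q-1}$. Each such ball is an $(n-q)$-dimensional analytic surface whose boundary sphere $\{\|z'\|^2=1-|c|^2\}$ lies on $\{\|z\|=1,\ \Repa{z_1}\ge r\}$, hence in $S\subset\Omega$. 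Starting from the degenerate balls with $|c|=1$, which are single points of $S$, I would grow the balls by letting $\Repa{c}$ decrease to $r$ and run a Kontinuit\"atssatz-type continuity argument: the set of parameters $(c,w)$ for which $\Phi(B_{c,w})\subset\Omega$ is nonempty and relatively open (because $\Phi(B_{c,w})$ is compact in the open set $\Omega$), and the inscription step described below, which encodes $q$-pseudoconvexity, is what forces this set to be relatively closed as well, so that it is everything and $\widehat S\subset\Omega$.

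For the converse I would run the construction in reverse. Given an injective holomorphic $\Phi:\Delta^n\to\Mcal$ and a $(q,n-q)$-Hartogs figure $H$ with $\Phi(H)\subset\Omega$, I want $\Phi(\Delta^n)\subset\Omega$. Fix a point $a\in\Delta^n\setminus H$, so that $\|(a_1,\dots,a_q)\|_\infty\ge r$ and $\|(a_{q+1},\dots,a_n)\|_\infty\le s$. Using the $n-q$ filling directions together with one base direction $z_j$ (for which $|a_j|\ge r$) as the ambient $\CC^{n-q+1}$, and letting the remaining $q-1$ base coordinates vary in a small polydisc to supply the $\Delta^{q-1}$ factor, I would inscribe into $\Delta^n$ a spherical hat pair of order $n-q+1$, orienting and scaling the ball so that its spherical hat $S$ is pushed into the shell $\{\|(z_{q+1},\dots,z_n)\|_\infty>s\}\subset H$ while its filling reaches $a$. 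Then $\Phi(S)\subset\Phi(H)\subset\Omega$, and property $(2)$ yields $\Phi(a)\in\widehat S\subset\Omega$. Covering $\Delta^n\setminus H$ point by point, shrinking the polydisc slightly and iterating whenever a single cap cannot stay inside the chart, then gives $\Phi(\Delta^n)\subset\Omega$.

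The main obstacle in both directions is the geometric mismatch between the two models: Hartogs figures are built from polydiscs, their distinguished boundaries are measured in $\|\cdot\|_\infty$, and they depend holomorphically on the base parameter, whereas the spherical caps are governed by the Euclidean norm and the radius $\sqrt{1-|c|^2}$ of the slicing balls is not a holomorphic function of $c$. Consequently the ball foliation of a cap is not literally a holomorphic family of polydiscs, and one cannot simply read off the result from a single Hartogs figure; the real work is to inscribe, near each ball $B_{c,w}$, an auxiliary Hartogs figure whose frame lies on $S$ together with already-filled balls and whose polydisc captures the next sliver of the cap, and dually to position a genuine sphere so that a prescribed spherical hat lands inside the polydisc shell of $H$. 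Verifying that these inscribed maps are holomorphic and injective, that their frames land in $\Omega$, and that the whole configuration stays inside the chart $U$ (respectively $\Delta^n$), is the technical heart of the argument; once this is done, the propagation is the standard continuity argument.
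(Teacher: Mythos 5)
Your architecture in the direction (\ref{prop-qpsc-caps-1})$\Rightarrow$(\ref{prop-qpsc-caps-4}) is essentially the paper's: the paper also slices the filled hat into $(n-q)$-dimensional balls whose boundary spheres lie on $S$ (its family $A_t$) and propagates along the family until a first contact with $\partial D$, $D=\Phi^{-1}(\Omega)$. But the step you defer --- deducing the closedness (equivalently, the first-contact contradiction) from the Hartogs-figure definition of $q$-pseudoconvexity --- is the entire content of this direction, and you never carry it out. The paper does not inscribe auxiliary Hartogs figures by hand either; it invokes a known result (Theorem~4.3.2 in the thesis of Pawlaschyk cited as \cite{Paw}) that Rothstein $q$-pseudoconvexity implies the continuity principle for $(n-q)$-dimensional analytic sets, and only then runs the ball-family argument. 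Without such a citation or an actual proof of your ``inscription step'', this half remains a plan rather than a proof.

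The converse direction contains a step that is not merely unexecuted but false. You propose, for an arbitrary $a\in\Delta^n\setminus H$, to place a cap of order $N=n-q+1$ whose hat is ``pushed into the shell'' $\{\|z''\|_\infty>s\}$, $z''=(z_{q+1},\ldots,z_n)$, and whose filled hat reaches $a$. This is impossible whenever $a''=0$, and such points do lie in $\Delta^n\setminus H$. Since your cap is a product in the remaining $q-1$ base variables, it suffices to rule this out in one slice $\CC^{N}$: let $\Psi$ be the cap chart with $S'=\Psi(\SS^{N}_{r'})\subset\{\|z''\|_\infty>s\}$, and suppose $\widehat{S}'$ meets the line $\ell=\{z''=0\}$. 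Then $V:=\Psi^{-1}(\ell)$ is an analytic set of positive dimension disjoint from the model hat $\SS^{N}_{r'}$, and $V\cap\widehat{\SS}^{N}_{r'}$ is compact and nonempty; the pluriharmonic function $\Repa{w_1}$ attains its maximum over this compact set at a point off $\SS^{N}_{r'}$, hence at a local maximum along $V$, so it is constant on the irreducible component $V_0$ through that point; then $V_0\cap\widehat{\SS}^{N}_{r'}$ is relatively clopen in $V_0$ (its relative boundary could only lie on $\SS^{N}_{r'}$, which $V_0$ avoids), forcing $V_0$ to be a compact positive-dimensional analytic subset of an open set in $\CC^{N}$ --- impossible. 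So caps whose hats are confined to the shell can never reach the core $\{z''=0\}$ of the Hartogs figure; your remark about iterating and shrinking addresses staying inside the chart, not this obstruction. The paper avoids it by \emph{not} trying to fill prescribed points: it enlarges the base radius of $H$ until its closure first touches $\partial D$, slices through the contact point to obtain a $(1,n-q)$-Hartogs figure $H'\subset D'\subset\Delta^{n-q+1}$, and then constructs (following Proposition~2.1 of \cite{Shc21}) a cap pair with $S'\subset D'$ --- the hat is allowed to run through all of $D'$, the full slice of $\Phi^{-1}(\Omega)$, not just through $H'$ --- and $\widehat{S}'\cap(D')^c\neq\emptyset$, finally thickening by a small polydisc $\Delta^{q-1}_\eps$. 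That extra freedom, supplied by the first-contact choice, is exactly what your construction renounces, and without it this direction cannot be completed.
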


\begin{proof} \textbf{(\ref{prop-qpsc-caps-1}) implies (\ref{prop-qpsc-caps-4}).} We prove by contradiction. Assume that $\Omega$ fulfills (\ref{prop-qpsc-caps-1}) but not (\ref{prop-qpsc-caps-4}) of the above proposition. Then we can find a spherical hat $\SS_r^{n-q+1}$ and an injective holomorphic map on a neighborhood $U$ in $\CC^n$ of $\widehat{\SS}_r^{n-q+1}\times\Delta^{q-1}$ so that $\Phi(\SS_r^{n-q+1}\times\Delta^{q-1})$ lies in $\Omega$, but the complement of $\Omega$ intersects $\Phi(\Int(\widehat{\SS}_r^{n-q+1}\times\Delta^{q-1}))$. Define $D:=\Phi^{-1}(\Omega)$. Then $D$ is $q$-pseudoconvex in~$U$ and, hence, admits the continuity principle with respect to $(n-q)$-dimensional analytic sets (cf., e.g., Theorem 4.3.2 in \cite{Paw}). Let $p$ be an intersection point of the interior of $\widehat{\SS}_r^{n-q+1}\times\Delta^{q-1}$ and the boundary of $D$ in $U$ with coordinates $p=(p_1,p',p'')$, where $p'=(p_2,\ldots,p_{n-q+1})$ and $p''=(p_{n-q+2},\ldots,p_n)$. Now we define $(n-q)$-dimensional analytic sets as follows:
\[
A_t:=\{(t+i\impa{p_1},z') \in \CC\times\CC^{n-q} : t^2+(\impa{p_1})^2 + \|z'\|^2 < 1 \} \times \{p''\},
\]
with $t_0:= \repa{p_1} \leq t < \sqrt{1-(\impa{p_1})^2 }=:t_1$ and $z'=(z_2,\ldots,z_{n-q+1})$. Observe that

\begin{itemize}

\item $\partial A_t:= \{(t+i\impa{p_1},z') \in \CC\times \CC^{n-q} : t^2+(\impa{p_1})^2 + \|z'\|^2 = 1 \} \times \{p''\}$ lies in $\SS_r^{n-q+1}\times\Delta^{q-1} \subset D$ for every $t \in [t_0,t_1]$,

\item $A_{t^*} \subset D$ for all $t^*<t_1$ close enough to $t_1$, since $\partial A_{t_1}=\{(t_1+i\impa{p_1},p',p'')\} \subset D$,

\item and $p \in A_{t_0} \cap \partial D \neq \emptyset$.

\end{itemize}

Then we can find $t^{**} \in (t_0,t^*)$ such that $A_{t^{**}}$ intersects the boundary of $D$ for the first time as $t \downarrow t_0$. But this contradicts the continuity principle with respect to $(n-q)$-dimensional analytic sets of the $q$-pseudoconvex domain $D$ in $U$. Hence, $\Omega$ has to fulfill property~(\ref{prop-qpsc-caps-4}).

\medskip

\textbf{(\ref{prop-qpsc-caps-4}) implies (\ref{prop-qpsc-caps-1}).} We prove again by contradiction. Assume that $\Omega$ admits property~(\ref{prop-qpsc-caps-4}), but is not $q$-pseudoconvex. Then there is a $(q,n-q)$-Hartogs figure $H^{q,n-q}_{r,s}$ and an injective holomorphic map $\Phi:\Delta^n \to \Mcal$ such that $\Phi(H^{q,n-q}_{r,s}) \subset \Omega$, but $\Phi(\Delta^n)$ intersects the complement of $\Omega$. 

Let $D:=\Phi^{-1}(\Omega)$. Then, by assumption made on the Hartogs figure, $H_{r,s}^{q,n-q} \subset D$, but $(H_{r,s}^{q,n-q})^c \cap D^c \neq \emptyset$ in $\Delta^n$. Now let 
\[
r_0 := \min \{ r>0 : H_{r,s}^{q,n-q} \cap D^c \neq \emptyset \}.
\]
This means that the closure of $H_{r_0,s}^{q,n-q}$ intersects the boundary of $D$ for the first time when the radius $r$ increases towards $1$. This intersection takes place at the boundary part of $H_{r_0,s}^{q,n-q}$ in $\Delta^n$ which lies in $\partial(\Delta^q_r) \times \Delta^{n-q}_s$. Let $p$ be such an intersecting point. Then there is an index $j_1 \in \{1,\ldots,q\}$ such that
\[
p \in \{p^*\}  \times \partial({\Delta}_{r_0}) \times \{p^{**}\} \times \Delta^{n-q}_s
\]
with $p^*:=(p_1,\ldots,p_{j_1-1}) \in \overline{\Delta^{{j_1}-1}_{r_0}}$ and  $p^{**}:=(p_{j_1+1},\ldots,p_{q}) \in \overline{\Delta^{q-{j_1}-1}_{r_0}}$. Consider the set $\Pi:=\{p^*\}\times \CC \times \{p^{**}\} \times \Delta^{n-q}$ and let $D' \subset \Delta^{n-q+1}$ be a domain such that 
\[
\pi\Big(D \cap \Pi \Big) = D',
\]
where $\pi:\CC^n \to \CC^{n-q+1}$ denotes the coordinate projection of the $(z_1,\ldots,z_n)$-coordinates to the $(z_{j_1},z'')=(z_{j_1},z_{q+1},\ldots,z_n)$-coordinates. Observe that
\[
\pi\big( H^{q,n-q}_{r_0,s} \cap \Pi \big) = H' = \{|z_{j_1}| < r_0\} \cup \{\|z''\|_\infty > s\}
\]
is an $(1,n-q)$-Hartogs figure in $\CC^{n-q+1}$ such that
\[
H' \subset D', \quad \text{but}\quad \Delta^{n-q+1} \cap (D')^c \neq \emptyset.
\] 
Now we can use the same technique as in the second part of the proof of Proposition~2.1 in~\cite{Shc21} and construct a spherical hat pair $(S',\hat S')$ of order $n-q+1$ in $\CC^{n-q+1}$ such that 
\[
S' \subset D', \quad \text{but}\quad \hat S' \cap (D')^c \neq \emptyset.
\]
We re-arrange the coordinates such that $z_{j_1}$ becomes $z_1$, and $z''$ becomes $(z_2,\ldots,z_{n-q+1})$. Then
\[
\big(S'\times\{(p^*, p^{**})\}\big) \subset D, \quad \text{but}\quad \big(\hat S'\times\{(p^*,p^{**})\}\big) \cap D^c \neq \emptyset.
\]
Recall that $(p^*,p^{**}) \in \Delta^{q-1}$ and notice that after a small pertubation, if necessary, we can assume that $\big(S'\times\{(p^*, p^{**})\}\big) \Subset D$. Hence, there exists a small $\eps \in (0,r)$ such that
\[
\big(S'\times\Delta_\eps^{q-1}\big) \subset D, \quad \text{but}\quad  \big(\hat S'\times\Delta_\eps^{q-1}\big) \cap D^c \neq \emptyset.
\]
Let $S:=\big(S'\times\Delta_\eps^{q-1}\big)$ and $\hat S:= \big(\hat S'\times\Delta_\eps^{q-1}\big)$. Then we obtain a spherical hat pair $(\Phi(S),\Phi(\hat S))$ of order $n-q+1$ with $\Phi(S) \subset \Omega$, but $\Phi(\hat S) \cap \Omega^c \neq \emptyset$. This contradicts the assumption made on $\Omega$ to fulfill property~(\ref{prop-qpsc-caps-4}) of this proposition. Hence, $\Omega$ must be $q$-pseudoconvex.



\end{proof}

In what follows, we study the relation of $q$-pseudoconvex sets and $q$-convex functions.

\begin{defn} Let $q\in\{1,\ldots,n\}$.

\begin{enumerate}

\item We say that a $\Ccal^2$-smooth function $\psi$ on $\Mcal$ is \emph{weakly $q$-convex} if for every point $p$ the Levi form of $\psi$ at $p$ has at most $q-1$ negative eigenvalues. We say it is \emph{$q$-convex} if for every point $p$ the Levi form of $\psi$ at $p$ has at most $q-1$ non-positive eigenvalues. 

\item Furthermore, $\psi$ is \emph{(weakly) $q$-convex with corners} on $\Mcal$ if for each point $p$ in $\Mcal$ there are a neighborhood $U$ of $p$ and finitely many (weakly) $q$-convex functions $\psi_1,\ldots,\psi_k$ on $\Mcal$ such that $\psi=\max\{\psi_j:j=1,\ldots,k\}$ on $U$.

\item We say that $\Mcal$ is \emph{$q$-complete with corners} if $\Mcal$ admits a $q$-convex with corners exhaustion function $\psi$, i.e., $\{ \psi < c\} \Subset \Mcal$ for every number $c \in \RR$. 

\end{enumerate}

\end{defn}

\begin{rem}\label{rem-qvx-fcts} 1. Counting the eigenvalues of the Levi form, it is immediate that a $q$-convex function is weakly $q$-convex. Furthermore, it is obvious that a weakly $q$-convex function is weakly $q$-convex with corners.

\medskip

2. Hunt and Murray \cite{HM} proved that a $\Ccal^2$-smooth function $\psi$ defined on an open set $U$ in $\CC^n$ is weakly $q$-convex if and only if it is \emph{$(q-1)$-plurisubharmonic} in their sense.

\medskip 

3. If $\Mcal$ is compact, there are no $q$-convex functions with corners on $\Mcal$. Indeed, otherwise, if such a function, say $\psi$, exists, it attains a maximum in $\Mcal$ at some point $p$. Then there is a $q$-convex function $\psi_{j_0}$ in a neighborhood $U$ of $p$ such that $\max\{\psi(z): z \in U\}=\psi_{j_0}(p)$. But all the eigenvalues of the Levi form of $\psi_{j_0}$ at $p$ have to be non-positive. This contradicts the definition of the $q$-convexity of $\psi_{j_0}$.

\end{rem}

We are especially interested in complements of $q$-pseudoconvex sets.

\begin{defn} Let $A$ be a closed set in $\Mcal$. Then $A$ is called \emph{$q$-pseudoconcave (in $\Mcal$)} if $\Mcal\setminus A$ is $q$-pseudoconvex.
\end{defn}

The following result is due to S{\l}odkowski~\cite[Proposition~5.2]{Sl}. We give it here as an example for $q$-pseudoconcave sets.

\begin{ex}\label{ex-q-pscve-analytic} If $A$ is an analytic subset of $\CC^n$ such that its dimension is at least $q$ at each point $z \in A$, then $A$ is $q$-pseudoconcave in $\CC^n$. 
\end{ex}

The $q$-convex functions admit the local maximum principle on $q$-pseudoconcave sets.

\begin{prop}\label{prop-qcvx-loc-max} Let $A$ be a closed set in $\Mcal$. Then the following properties are equivalent.

\begin{enumerate}

\item $A$ is $q$-pseudoconcave in $\Mcal$.

\item For every $z$ in $A$ there exists an open neighborhood $V$ of $z$ in $\Mcal$ such that $A \cap V$ is  $q$-pseudoconcave in $V$.

\item For every $z$ in $A$ there exists an open neighborhood $U$ of $z$ in $\Mcal$ such that for every  compact set $L \Subset U$ and every weakly $q$-convex function $\psi$ defined in some neighborhood of $L$ we have 
\[
\max\{\psi(z): z \in A \cap L\} = \max\{\psi(z): z \in A \cap \partial L\}.
\]

\end{enumerate}

Notice that we set $\max\{\psi(z): z \in A \cap E\}=-\infty$, if $E$ is some closed set in $\Mcal$ with $A \cap E=\emptyset$.

\end{prop}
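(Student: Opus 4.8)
The plan is to establish the cycle $(1)\Rightarrow(3)\Rightarrow(2)\Rightarrow(1)$; the implication $(1)\Rightarrow(2)$ is anyway immediate, since restricting the maps $\Phi$ in the Kontinuit\"atssatz shows that $q$-pseudoconvexity of $\Mcal\setminus A$ passes to $V\setminus A$ for every open $V$.

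\textbf{$(1)\Rightarrow(3)$.} I would fix a coordinate ball $U$ around $z$ and argue by contradiction. Suppose some $L\Subset U$ and some weakly $q$-convex $\psi$ near $L$ satisfy $\max_{A\cap L}\psi>\max_{A\cap\partial L}\psi=:c$, so that $\psi$ attains its maximum $M$ over $A\cap L$ at an interior point $z_0$. The first move is to perturb: set $\tilde\psi:=\psi+\alpha\|z-z_0\|^2$ in the local coordinates, which is again weakly $q$-convex and, for $\alpha$ small enough, still satisfies $\max_{A\cap L}\tilde\psi>\max_{A\cap\partial L}\tilde\psi$; hence $\tilde\psi$ attains its maximum $\tilde M$ over $A\cap L$ at some interior point $z_1$, and $\{\tilde\psi>\tilde M\}\cap A\cap L=\emptyset$. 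At $z_1$ the nonnegative eigenspace of the Levi form of $\psi$ has complex dimension at least $n-q+1$, so a dimension count shows it meets $\ker\partial\tilde\psi(z_1)$ in a subspace $\Lambda$ of dimension at least $n-q$; along $\Lambda$ the first order of $\tilde\psi$ vanishes and, thanks to the perturbation, its Levi form is $\ge\alpha>0$. Choosing $(n-q)$-dimensional analytic balls tangent to $z_1+\Lambda$ and osculating it so as to cancel the holomorphic Hessian of $\tilde\psi$, I obtain small balls $B$ with $\partial B\subset\{\tilde\psi>\tilde M\}\subset\Mcal\setminus A$ while $z_1\in B\cap A$. Translating these balls along a direction in which $\tilde\psi$ strictly increases produces a family $\{B_t\}$ whose boundaries all avoid $A$ and with $B_{t_1}\subset\Mcal\setminus A$ for large $t_1$, yet $B_0\cap A\neq\emptyset$. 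This contradicts the continuity principle with respect to $(n-q)$-dimensional analytic sets enjoyed by the $q$-pseudoconvex set $\Mcal\setminus A$ (cf.\ the proof of Proposition~\ref{prop-qpsc-caps}). I expect this to be the main obstacle: the genuine difficulty is the possible non-strictness of the Levi form of $\psi$ along $\Lambda$, which is exactly what the perturbation together with the re-selection of the maximum point $z_1$ is designed to repair.

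\textbf{$(3)\Rightarrow(2)$.} Here I would choose $V\Subset U$ and verify, via Proposition~\ref{prop-qpsc-caps} applied in $V$, that $A\cap V$ is $q$-pseudoconcave. If not, there is a spherical hat pair $(S,\widehat S)$ of order $n-q+1$ with $\widehat S\subset V$, $S\cap A=\emptyset$ and a point $p_0\in\widehat S\cap A$. Writing $w:=\Phi^{-1}(p_0)$ for the hat's map $\Phi$ and translating the polydisc factor so that the last $q-1$ coordinates of $w$ vanish, I would use the explicit weakly $q$-convex function $\psi:=\bigl(\Repa{z_1}-C\|z''\|^2\bigr)\circ\Phi^{-1}$, whose Levi form carries exactly the $q-1$ negative eigenvalues coming from $-C\|z''\|^2$. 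Taking $L$ to be a filled hat with cap cutoff slightly below that of $\widehat S$, the value $\psi(p_0)=\Repa{w_1}$ exceeds $\psi$ on all of $A\cap\partial L$: on the spherical cap above the original cutoff the set $A$ is absent, while on the remaining boundary either $\Repa{z_1}$ lies below the cutoff or $C\|z''\|^2$ is large. This contradicts the maximum principle in (3).

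\textbf{$(2)\Rightarrow(1)$.} Finally I would deduce global $q$-pseudoconcavity from the local one by a standard localization. If $\Mcal\setminus A$ were not $q$-pseudoconvex, Proposition~\ref{prop-qpsc-caps} would furnish a violating spherical hat pair; sweeping its filling by the associated $(n-q)$-dimensional analytic balls and selecting the first one that meets $A$, a shrinking/first-contact argument yields a violating hat pair whose filled hat lies inside a single neighborhood $V_z$ from (2), contradicting the local $q$-pseudoconcavity of $A$ there.
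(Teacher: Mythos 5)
First, a point of orientation: the paper does not prove this proposition at all — it imports it wholesale, citing the proofs of Theorems~4.2 and~5.1 in S{\l}odkowski~\cite{Sl} and Proposition~3.3 of~\cite{HST14}. So you are attempting to reprove a genuinely nontrivial cited result. Your architecture (the cycle $(1)\Rightarrow(3)\Rightarrow(2)\Rightarrow(1)$, with $(1)\Rightarrow(2)$ immediate by restricting test maps) is sensible, and your step $(3)\Rightarrow(2)$ is essentially correct: the function $(\Repa{z_1}-C\|z''\|^2)\circ\Phi^{-1}$ has exactly $q-1$ negative Levi eigenvalues, and the boundary estimates (no $A$ on the spherical part above the cutoff since it lies in $S$; $\Repa{z_1}<\Repa{w_1}$ on the band and the flat face by compactness; $1-C\rho^2$ small on the polydisc side for large $C$) do yield a strict violation of the local maximum principle. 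But two of your steps have genuine holes. In $(1)\Rightarrow(3)$, your osculation silently assumes $\partial\tilde\psi(z_1)\neq 0$: cancelling the holomorphic Hessian along $\Lambda$ means solving $\partial\tilde\psi(z_1)\,h(w)=-\tfrac12 w^{t}Hw$ for a correction $h$, which is impossible at a critical point, and at such a point there is also no "direction in which $\tilde\psi$ strictly increases" to drive the translation. Nothing forces the maximum point of $\tilde\psi$ over $A\cap L$ to be noncritical; your perturbation $\alpha\|z-z_0\|^2$ repairs non-strictness of the Levi form — a different degeneracy — and a further small linear perturbation moves the maximum point, so it does not obviously restore a nonvanishing gradient. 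The critical case is repairable (e.g., sweep with the level sets $\{w\in\Lambda : w^{t}Hw=t\}$, on which $\tilde\psi>\tilde M$ by the Levi positivity, or with flat translated balls when the Hessian vanishes identically on $\Lambda$), but that idea is absent from your text.

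The more serious gap is $(2)\Rightarrow(1)$, which you dispatch in one sentence but which is exactly the hard local-to-global content of S{\l}odkowski's Theorem~5.1. The first-contact argument with the $(n-q)$-dimensional ball family tells you only that $A$ avoids the swept slab — a one-sided set of real dimension $2(n-q)+1$ — or, if you also sweep the parameters $\Impa{p_1}$ and $p''$, a one-sided region bounded by the graph of the contact time $t^{**}$, which is merely upper semicontinuous. A violating hat pair inside a small $V_z$ would require a spherical cap $S'$ (after thickening, a hypersurface) avoiding $A$ whose filled hat still catches a point of $A$; any such cap must dip below the level $t^{**}(p')$ near the contact point $p'$, into the layer where you have no information about $A$, and the u.s.c.\ graph admits no interior-ball touching. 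Note the asymmetry with the paper's Proposition~\ref{prop-qpsc-caps}: there the input is an \emph{open} Hartogs figure contained in $\Omega$, which provides full-dimensional room for the cap construction of~\cite{Shc21}; at a first-contact point that thickness degenerates, and no localized violating hat comes out. As it stands, $(2)\Rightarrow(1)$ needs either S{\l}odkowski's function-theoretic machinery or a substitute argument you have not supplied.
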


\begin{proof} The proof is based on the corresponding proofs of Theorems~4.2 and~5.1 in S{\l}odkowski~\cite{Sl}. It can be found in Proposition~3.3 of~\cite{HST14}. 




\end{proof}

As an application of the local maximum principle, we derive an example for $q$-pseudoconvex sets using $(n-q)$-convex functions.

\begin{prop}\label{prop-q-cvx-psc} Assume that an open set $\Omega$ in $\Mcal$ admits an $(n-q)$-convex exhaustion function. Then $\Omega$ is $q$-pseudoconvex.
\end{prop}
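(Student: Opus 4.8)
The plan is to reduce, via Proposition~\ref{prop-qpsc-caps}, the $q$-pseudoconvexity of $\Omega$ to the spherical hat condition, and then to run a continuity (first-touching) argument in which the role normally played by the continuity principle is instead played by a maximum principle extracted from the $(n-q)$-convex exhaustion function. So I would let $\varphi$ be an $(n-q)$-convex exhaustion function of $\Omega$ and assume, towards a contradiction, that $\Omega$ fails property~(\ref{prop-qpsc-caps-4}) of Proposition~\ref{prop-qpsc-caps}: there is a spherical hat pair $(S,\widehat S)$ of order $k:=n-q+1$ with $S=\Phi(\SS_r^{k}\times\Delta^{q-1})\subset\Omega$, but $\widehat S=\Phi(\widehat{\SS}_r^{k}\times\Delta^{q-1})\not\subset\Omega$. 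Writing $D:=\Phi^{-1}(\Omega)$ and $\psi:=\varphi\circ\Phi$, the function $\psi$ is again $(n-q)$-convex on $D$ (holomorphic changes of coordinates preserve the eigenvalue count of the Levi form), and since $\varphi$ exhausts $\Omega$ we have $\psi\to+\infty$ along every sequence in $D$ that approaches $\partial D$ inside the domain $U$ of $\Phi$. Finally I pick $\zeta=(\zeta_1,\zeta',w)\in\widehat{\SS}_r^{k}\times\Delta^{q-1}$ with $\Phi(\zeta)\notin\Omega$, i.e.\ $\zeta\notin D$, and record that $\repa{\zeta_1}\geq r$.

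The geometric core is a foliation of $\widehat{\SS}_r^{k}\times\{w\}$ by the complex affine ball slices
\[
A_t:=\big(\{z_1=t+i\,\impa{\zeta_1}\}\cap\{\|(z_1,\ldots,z_k)\|\leq 1\}\big)\times\{w\},\qquad t\in[\,\repa{\zeta_1},\,t_1\,],
\]
where $t_1:=\sqrt{1-(\impa{\zeta_1})^2}$. Each $A_t$ is a closed ball inside an $(n-q)$-dimensional complex affine subspace, its boundary $\partial A_t=\{\|z\|=1,\ \repa{z_1}=t\}\times\{w\}$ lies in $\SS_r^{k}\times\{w\}$ (since $t\geq\repa{\zeta_1}\geq r$), and these slices sweep out the whole filled hat. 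The key observation is that the restriction of the $(n-q)$-convex function $\psi$ to the $(n-q)$-dimensional slice $A_t$ is, by Cauchy interlacing of the eigenvalues of a Hermitian form under restriction to a subspace, again $(n-q)$-convex as a function on the $(n-q)$-dimensional manifold $A_t$; equivalently its Levi form has at least one positive eigenvalue at every point. Hence $\psi|_{A_t}$ can have no interior local maximum — at such a point the Levi form would be negative semi-definite — so $\psi$ satisfies $\max_{A_t}\psi=\max_{\partial A_t}\psi$ on every slice contained in $D$. (Alternatively, this maximum principle follows by combining Example~\ref{ex-q-pscve-analytic}, which exhibits each $A_t$ as an $(n-q)$-pseudoconcave set, with Proposition~\ref{prop-qcvx-loc-max} applied to the weakly $(n-q)$-convex function $\psi$.)

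With these slices in hand I would run the continuity argument. Since $A_{t_1}$ is a single point of $\SS_r^{k}\times\{w\}\subset D$ while $\zeta\in A_{\repa{\zeta_1}}\setminus D$, there is a first-touching parameter $t^{**}:=\inf\{t: A_{t'}\subset D \text{ for all } t'\in(t,t_1]\}$, at which $A_{t^{**}}$ meets $\partial D$ at some point $p^{**}$, while $A_t\Subset D$ for every $t\in(t^{**},t_1]$. Because $\{w\}$ is a single point, the set $\Phi(\SS_r^{k}\times\{w\})$ is a compact subset of $\Omega$, so $M:=\sup_{\SS_r^{k}\times\{w\}}\psi<\infty$ and therefore $\max_{\partial A_t}\psi\leq M$ for all $t$; by the previous step $\max_{A_t}\psi\leq M$ for all $t\in(t^{**},t_1]$. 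Choosing $t_\nu\downarrow t^{**}$ and points $q_\nu\in A_{t_\nu}$ with $q_\nu\to p^{**}\in\partial D$ yields $\psi(q_\nu)\leq M$, contradicting $\psi(q_\nu)\to+\infty$. This contradiction forces $\widehat S\subset\Omega$, and Proposition~\ref{prop-qpsc-caps} then gives that $\Omega$ is $q$-pseudoconvex. The step I expect to demand the most care is the claim that the restriction of $\psi$ to the $(n-q)$-dimensional slices is again $(n-q)$-convex and hence obeys the maximum principle: this is exactly where the index $(n-q)$ of the convexity must be matched to the dimension $n-q$ of the slices dictated by Rothstein's $(n-q)$-polydiscs, and where the eigenvalue interlacing must be carried out precisely.
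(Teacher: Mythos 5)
Your proof is correct, but it takes a genuinely different route from the paper's. The paper verifies Rothstein's definition directly: given a Hartogs figure $H^{q,n-q}$ with $\Phi(H^{q,n-q})\subset\Omega$, it shrinks the figure so that $\Phi(H^{q,n-q})$ is contained in a fixed sublevel set $\Omega_c=\{\rho<c\}$ of the exhaustion $\rho$, covers $\Delta^n$ by $H^{q,n-q}$ together with the closed polydisc slices $A_{\zeta,\sigma}=\{\zeta\}\times\Delta_\sigma^{n-q}$ whose boundaries $\{\zeta\}\times\partial\Delta_\sigma^{n-q}$ lie in $H^{q,n-q}$, and propagates the bound $\rho\circ\Phi<c$ from these boundaries to the full slices via the maximum principle for $(n-q)$-convex functions on $(n-q)$-dimensional slices, concluding $\Phi(\Delta^n)\subset\Omega_c\subset\Omega$; the role of your first-touch sweep is thus absorbed into containment in a single sublevel set. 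You instead pass through the equivalence of Proposition~\ref{prop-qpsc-caps}, check the spherical-hat criterion with the ball slices $A_t$ (the same family the paper uses in the implication (\ref{prop-qpsc-caps-1})$\Rightarrow$(\ref{prop-qpsc-caps-4}) there), and obtain the contradiction from the blow-up of the exhaustion at a first-touch boundary point of $D=\Phi^{-1}(\Omega)$ against the uniform bound $M=\sup_{\SS_r^k\times\{w\}}\psi$. Both arguments rest on the same key lemma --- the maximum principle for an $(n-q)$-convex function restricted to $(n-q)$-dimensional complex slices --- and your eigenvalue-interlacing justification (at least $q+1$ positive Levi eigenvalues, of which at least one survives restriction to an $(n-q)$-dimensional subspace) is exactly what the paper leaves implicit when it invokes ``the maximum principle for $q$-convex functions.'' What your detour buys: an explicit continuity argument where the paper is terse (its slices $A_{\zeta,\sigma}$ are not a priori contained in $\Phi^{-1}(\Omega)$, so a sweep of the kind you perform is really needed to apply the maximum principle there), and no shrinking of the Hartogs figure. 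What it costs: dependence on the nontrivial Proposition~\ref{prop-qpsc-caps} (not circular, since that proposition is proved independently of this one), plus one glossed detail you should make explicit --- the first-touch point $p^{**}$ lies in the \emph{interior} of its ball slice, because sphere points with $\Repa{z_1}\geq r$ belong to the preimage of $S\subset\Omega$ and hence to the open set $D$; this is what guarantees the approximating points $q_\nu\in A_{t_\nu}$ with $q_\nu\to p^{**}$ exist.
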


\begin{proof} Let $H^{q,n-q}=H^{q,n-q}(r,s)$ be a Hartogs figure with $\Phi(H^{q,n-q}) \subset \Omega$. Now for $\zeta \in \Delta^q$ and $s < \sigma < 1$ consider the polydiscs 
\[
A_{\zeta, \sigma} := \{\zeta\} \times \Delta_{\sigma}^{n-q}.
\]
Then 
\begin{eqnarray}\label{slices-in-omega}
\Delta^n = H^{q,n-q} \cup \bigcup_{\stackrel{\|\zeta\|_{\infty}<1}{s < \sigma < 1}} A_{\zeta,\sigma},
\qand \partial A_{\zeta,\sigma} := \{\zeta\} \times \partial \Delta_{\sigma}^{n-q} \ \text{lies in}\ H^{q,n-q}.
\end{eqnarray}
We can shrink $H^{q,n-q}$ a little bit in order to get that $\Phi(H^{q,n-q})$ lies relatively compact in~$\Omega$. Let $\rho:\Omega \to \RR$ be an exhaustion function of~$\Mcal$. Then we can find a real constant $c$ such that $\Phi(H^{q,n-q})$ is contained in $\Omega_c:=\{z \in \Omega : \rho(z)< c\}$. In particular, $\rho \circ \Phi < c$ on $\partial A_{\zeta,\sigma}$ for every $|\zeta|<1$ and~$s < \sigma < 1$ by the inclusion in~(\ref{slices-in-omega}). Now the maximum principle for $q$-convex functions implies that $\rho \circ \Phi< c$ on $A_{\zeta,\sigma}$ for every $|\zeta|<1$ and $s < \sigma < 1$. By the identity in (\ref{slices-in-omega}), we conclude that $\rho \circ \Phi < c$ on $\Delta^n$, which yields $\Phi(\Delta^n) \subset \Omega_c \subset \Omega$. This means that $\Omega$ is $q$-pseudoconvex.
\end{proof}

We are now able to establish the first criteria for the existence of $q$-convex sets on compacts.

\begin{cor}\label{cor-exist-q-convex} Let $K$ be a compact $q$-pseudoconcave set in $\Mcal$. 

\begin{enumerate}

\item If there exists a $q$-convex function $\psi$ with corners in some open neighborhood of $K$, then $K$ is empty.

\item If $\Mcal$ is $q$-complete with corners, then $K$ is empty. In particular, if $\Mcal$ is Stein, then there are no compact $q$-pseudoconcave sets in~$\Mcal$.

\end{enumerate}

\end{cor}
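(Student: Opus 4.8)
The plan is to exploit the local maximum principle for $q$-convex functions on $q$-pseudoconcave sets, which is precisely the content of Proposition~\ref{prop-qcvx-loc-max}. The key observation is that a compact $q$-pseudoconcave set $K$ admits, at each of its points, a neighborhood on which every weakly $q$-convex function satisfies the local maximum principle along $K$. Since a $q$-convex function is in particular weakly $q$-convex (Remark~\ref{rem-qvx-fcts}), this machinery applies directly.

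For part~(1), suppose $K$ is a nonempty compact $q$-pseudoconcave set and that $\psi$ is a $q$-convex function with corners defined on a neighborhood of $K$. Since $\psi$ is continuous and $K$ is compact, $\psi$ attains a global maximum over $K$ at some point $p \in K$. Near $p$ we may write $\psi = \max\{\psi_1,\ldots,\psi_k\}$ for honest $q$-convex functions $\psi_j$, and relabel so that $\psi(p) = \psi_{j_0}(p)$. First I would choose, using Proposition~\ref{prop-qcvx-loc-max}(3), a neighborhood $U$ of $p$ witnessing the local maximum principle, and then pick a small closed ball (or polydisc) $L \Subset U$ centered at $p$ so that $p \in \Int(L)$. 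Applying the local maximum principle to the weakly $q$-convex function $\psi_{j_0}$ on $L$ gives
\[
\psi_{j_0}(p) \leq \max\{\psi_{j_0}(z): z \in A \cap L\} = \max\{\psi_{j_0}(z): z \in A \cap \partial L\},
\]
where $A = K$. Hence there is a boundary point $p' \in K \cap \partial L$ with $\psi_{j_0}(p') \geq \psi_{j_0}(p) = \psi(p)$. But $\psi \geq \psi_{j_0}$ everywhere, so $\psi(p') \geq \psi(p)$, forcing equality since $p$ was a maximum of $\psi$ over $K$. The subtle point here is that the maximum of $\psi$ over $K$ is attained in the interior $\Int(L)$ but the local maximum principle pushes a maximizer of $\psi_{j_0}$ out to $\partial L$; one must argue that this produces a genuine contradiction rather than a mere relocation of the maximum.

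To close part~(1), I expect the main obstacle to be ruling out that the maximum simply migrates to the boundary indefinitely. The cleanest resolution is to choose $L$ so small that $\psi(p)$ is strictly larger than $\psi$ on $\partial L \cap K$ whenever $p$ is an \emph{isolated} or \emph{strict} maximizer, but since $\psi$ need not have a strict maximum, the better route is: among all points of $K$ where $\psi$ attains its global maximum $M$, the set $K_M := \{z \in K : \psi(z) = M\}$ is compact and nonempty. Taking $p$ to be a point of $K_M$ and $L$ small, the identity above yields a point of $K \cap \partial L$ with $\psi_{j_0}$-value at least $M$; combined with $\psi \le M$ on $K$ and $\psi \ge \psi_{j_0}$, this point also lies in $K_M$ and on $\partial L$. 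Iterating, or more directly observing that every point of $K_M$ is non-isolated in $K_M$, leads to a contradiction with the definition of $q$-convexity: at any maximizer $p$ of $\psi_{j_0}$, all eigenvalues of the Levi form of $\psi_{j_0}$ are non-positive, contradicting that $\psi_{j_0}$ is $q$-convex (as noted already in Remark~\ref{rem-qvx-fcts}(3), which I would invoke for the final contradiction). Thus no such $\psi$ can exist unless $K = \emptyset$.

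For part~(2), if $\Mcal$ is $q$-complete with corners it carries a global $q$-convex exhaustion function $\psi$ with corners, whose restriction to a neighborhood of $K$ is in particular a $q$-convex function with corners near $K$; part~(1) then immediately forces $K = \emptyset$. The final assertion follows because a Stein manifold is $1$-complete, hence $q$-complete with corners for every $q \in \{1,\ldots,n-1\}$ (a strictly plurisubharmonic exhaustion function is $q$-convex for all such $q$, since having zero non-positive eigenvalues certainly means having at most $q-1$). Therefore a Stein manifold admits no nonempty compact $q$-pseudoconcave subsets, completing the proof.
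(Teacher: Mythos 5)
Your setup for part~(1) — maximum point $p$ of $\psi$ on $K$, local representation $\psi=\max\{\psi_1,\ldots,\psi_k\}$, selection of $\psi_{j_0}$ with $\psi_{j_0}(p)=\psi(p)$, and the application of the local maximum principle of Proposition~\ref{prop-qcvx-loc-max} — matches the paper exactly, and you correctly identified the crux: the principle alone only relocates the maximizer to $\partial L$, it does not by itself contradict anything. But your proposed resolution has a genuine gap. The final contradiction you invoke — ``at any maximizer $p$ of $\psi_{j_0}$, all eigenvalues of the Levi form of $\psi_{j_0}$ are non-positive'' — is unjustified here: $p$ is a maximizer of $\psi_{j_0}$ only over the compact set $K$ (or $K\cap L$), which in general has empty interior in $\Mcal$, so no second-derivative test applies at $p$. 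Remark~\ref{rem-qvx-fcts}(3), which you cite for this step, works precisely because there the maximum is attained over an \emph{open} neighborhood ($\Mcal$ compact), and only then is the Hessian constrained. Likewise, the observation that the maximizer set $K_M$ has no isolated points yields no contradiction: everything you derive is consistent with $\psi_{j_0}$ being identically equal to $M$ on a nontrivial compact piece of $K$, and nothing in your argument rules this out, because you never use the \emph{strictness} of $q$-convexity in a quantitative way.

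The paper closes this gap with a perturbation trick. Working in a local chart with $p_0=0$, it sets $\Psi_0:=\psi_0-\eps_0\|z\|^2$ for $\eps_0>0$ small enough that $\Psi_0$ is still $q$-convex (the eigenvalue condition is open on compacts — this is exactly where strict $q$-convexity is consumed). Since $\Psi_0\leq\psi_0$ with equality only at $0$, the point $0$ still maximizes $\Psi_0$ on $K\cap B$ for a small ball $B$, and the local maximum principle then gives the strict chain
$\psi_0(0)=\Psi_0(0)\leq\max\{\Psi_0 : K\cap\partial B\}<\max\{\psi_0 : K\cap\partial B\}\leq\max\{\psi_0: K\cap B\}=\psi_0(0)$,
with the strict middle inequality coming from $\Psi_0=\psi_0-\eps_0 r^2$ on $\partial B$; this is an outright numerical contradiction, with no migration possible (and the convention $\max\emptyset=-\infty$ handles the case $K\cap\partial B=\emptyset$). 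You should replace your $K_M$/iteration paragraph by this perturbation argument. Your part~(2) is correct and agrees with the paper: restrict the exhaustion function to a neighborhood of $K$ and apply part~(1); and a strictly plurisubharmonic exhaustion on a Stein manifold is $1$-convex, hence $q$-convex for all $q$.
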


\begin{proof} 1. Assume that there exists a function $\psi$ which is $q$-convex with corners on an open neighborhood of $K$ and that $K$ is non-empty. Then $\psi$ admits a maximum at a point $p_0$ in $K$, i.e.,
\[
\psi(p_0) = \max\{\psi(z) : z \in K\}.
\]
Choose a neighborhood $U$ of $p_0$ so small, that we can assume $U$ to be a local chart on $\Mcal$, and so that $\psi=\max\{\psi_1,\psi_2,\ldots,\psi_k\}$ on $U$, where $\psi_1,\psi_2,\ldots,\psi_k$ are $q$-convex functions on~$U$. Pick one of these functions, say $\psi_0:=\psi_{j_0}$ for some $j_0\in\{1,2,\ldots,k\}$, such that $\psi_{0}(p_0)=\psi(p_0)$. Then
\[
\psi_{0}(p_0) = \max\{\psi_{0}(z) : z \in U \cap K\}.
\]
Since $U$ is a local chart, and since $q$-convexity is invariant under biholomorphic change of coordinates, we can assume that $U$ lies inside the ball $\BB_R^n(0)$ with radius $R>0$ centered at the origin in $\CC^n$ and $p_0=0$. Notice also that $q$-pseudoconcavity is invariant under biholomorphic changes of coordinates. In particular, $U \cap K$ is $q$-pseudoconcave in $U$. Now let $B:=\BB_r^n(0) \Subset U$ be a smaller ball lying relatively compact in $U$ and choose an $\varepsilon_0>0$ such that $\Psi_0:=\psi_0-\eps_0\|z\|^2$ is still $q$-convex on $U$. Clearly, since $B$ contains $p_0=0$, we have
\[
\Psi_0(0)=\max\{\Psi_0(z) : z \in K \cap B\}.
\]
Then, by Proposition~\ref{prop-qcvx-loc-max}, we obtain
\[
\psi_0(0)=\Psi_0(0)\leq\max\{\Psi_0(z) : z \in K \cap B\} = \max\{\Psi_0(z) : z \in K \cap \partial B\}
\]
\[
 < \max\{\psi_0(z) : z \in K \cap \partial B\} = \max\{\psi_0(z) : z \in K \cap B\} = \psi_0(0).
\]
This leads to a contradiction on the assumption made on $K$ being nonempty. Therefore, $K$ must be empty, and we have proven the first part of this proposition.

\medskip

2. If $\Mcal$ is $q$-complete with corners, it admits a $q$-convex with corners exhaustion function. By the first part of this proposition, $K$ must be empty. If $\Mcal$ is Stein, then it admits a smooth strictly plurisubharmonic (i.e., $1$-convex) exhaustion function. Every $1$-convex function is automatically $q$-convex. Hence, any compact $q$-pseudoconcave set $K$ must be empty.

\end{proof}

\section{The \textit{q}-nucleus for compact sets}\label{sec-q-nuc-compact}

The $q$-pseudoconcavity gives already a condition on the existence of a $q$-convex function. The question remains whether it is possible to reverse statement~1 in Corollary~\ref{cor-exist-q-convex}. For this, we construct a special kind of subset of a given set $K$, the \emph{$q$-nucleus of $K$}, which is closely related on the existence of $q$-convex functions near $K$.

\begin{defn}\label{def-q-nucleus} Fix an integer number $q\in\{1,\ldots,n-1\}$. 

\begin{enumerate}

\item Let $K''\subset K'\subsetneq\Mcal$ be two proper compact sets in $\Mcal$. We say that $K''$ \emph{is obtained from $K'$ by a spherical cut of order $k$} if there exists a spherical hat pair $(S,\widehat{S})$ of order $k$ such that

\begin{itemize}

\item $S \subset \Mcal\setminus K'$,

\item and $K''=K' \setminus \Int(\widehat{S})$.

\end{itemize}


\item For two compacts $K''\subset K'$ we say that \emph{$K''$ is obtained from $K'$ by a sequence of spherical cuts of order $k$}, if there exists a finite sequence $\{K_j\}_{j=1}^m$ of compact sets such that

\begin{itemize}

\item $K_1 \supset K_2 \supset \cdots \supset K_m$,

\item $K_1=K'$ and $K''=K_m$,

\item and $K_{j+1}$ is obtained from $K_j$ by a spherical cut of order $k$ for each $j=1,\ldots,m-1$.

\end{itemize}

\item For a proper compact set $K \subsetneq \Mcal$ we define $\Fcal_K^q$ to be the set of all compacts $K''$ which are obtained from $K$ by a sequence of spherical cuts of order $n-q+1$. Then we define the \emph{$q$-nucleus $\nuc_q(K)$ of $K$} as the intersection of all compacts $K'' \in \Fcal_K^q$, i.e.,
\[
\nuc_q(K) := \bigcap_{K'' \in \Fcal_K^q} K''.
\]
\item Otherwise, if $K=\Mcal$, i.e., if $\Mcal$ is compact itself, we simply set $\nuc_q(\Mcal) :=\Mcal$.

\end{enumerate}

\begin{rem}\label{q-nucleus-compact} Notice that the $q$-nucleus is a compact set, and that the $1$-nucleus corresponds to the \emph{nucleus} introduced in~\cite{Shc21}.
\end{rem}

We need the following lemma to derive some important properties of the $q$-nucleus.

\begin{lem} For a finite family of sets $\{K''_j\}_{j=1}^l$ from $\Fcal^q_K$, the intersection $\D \bigcap_{j=1}^l K''_j$ also lies in $\Fcal^q_K$. 
\end{lem}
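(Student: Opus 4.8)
The plan is to reduce everything to the case of two sets and then induct on $l$, since $\bigcap_{j=1}^l K''_j = \big(\bigcap_{j=1}^{l-1} K''_j\big) \cap K''_l$ and the partial intersections will again turn out to lie in $\Fcal^q_K$. The case $l=1$ is trivial, so the heart of the matter is the two-set statement: if $A, B \in \Fcal^q_K$, then $A \cap B \in \Fcal^q_K$. Granting this, the inductive step follows at once by applying it to $\bigcap_{j=1}^{l-1} K''_j$ (which is in $\Fcal^q_K$ by the induction hypothesis) and to $K''_l$.

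The key structural observation I would isolate first is a monotonicity property of spherical cuts. If $(S,\widehat{S})$ is a spherical hat pair of order $n-q+1$ with $S \subset \Mcal\setminus K'$, and $L \subset K'$ is any compact subset, then automatically $S \subset \Mcal\setminus L$, so the very same hat pair furnishes an admissible spherical cut of $L$; moreover $L \setminus \Int(\widehat{S}) \subset K' \setminus \Int(\widehat{S})$. In words: a cut that is legal for a larger set is legal for every smaller subset, and performing it preserves inclusions. This is the single fact that makes the whole argument go through.

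With this in hand, I would write $B$ as the endpoint of a sequence of cuts $K = L_0 \supset L_1 \supset \cdots \supset L_m = B$, where $L_{i+1} = L_i \setminus \Int(\widehat{S}_{i+1})$ and $S_{i+1} \subset \Mcal\setminus L_i$. Now I apply the identical hat pairs $(S_i,\widehat{S}_i)$, in the same order, but starting from $A$ instead of from $K$: set $\tilde{L}_0 := A$ and $\tilde{L}_{i+1} := \tilde{L}_i \setminus \Int(\widehat{S}_{i+1})$. The crucial inductive claim is that $\tilde{L}_i \subset L_i$ for every $i$. The base case $\tilde{L}_0 = A \subset K = L_0$ holds because $A \in \Fcal^q_K$ is a subset of $K$; and in the inductive step, $\tilde{L}_i \subset L_i$ together with $S_{i+1} \subset \Mcal\setminus L_i$ gives $S_{i+1} \subset \Mcal\setminus \tilde{L}_i$, so the transplanted cut is genuinely admissible, while the monotonicity observation propagates the inclusion to $\tilde{L}_{i+1} \subset L_{i+1}$. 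Finally I compute the endpoint: unwinding the recursions yields $B = K \setminus \bigcup_{i=1}^m \Int(\widehat{S}_i)$ and $\tilde{L}_m = A \setminus \bigcup_{i=1}^m \Int(\widehat{S}_i)$, and since $A \subset K$ a one-line set identity gives $\tilde{L}_m = A \cap \big(K \setminus \bigcup_i \Int(\widehat{S}_i)\big) = A \cap B$. Concatenating the sequence producing $A$ from $K$ with this new sequence from $A$ exhibits $A \cap B$ as obtained from $K$ by a single sequence of spherical cuts of order $n-q+1$; hence $A\cap B \in \Fcal^q_K$. Along the way one notes that every intermediate set is compact (an intersection of a compact set with the closed set $\Mcal\setminus\Int(\widehat{S}_{i+1})$) and proper, since all of them sit inside $K \subsetneq \Mcal$.

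The one step that needs genuine care is the inductive claim $\tilde{L}_i \subset L_i$ coupled with the verification that each transplanted cut remains admissible for the smaller starting set; this is exactly where the monotonicity observation is used and is the conceptual content of the proof. Everything else — the reduction to $l=2$, the final set-theoretic identity for $\tilde{L}_m$, and the compactness and properness bookkeeping — is routine.
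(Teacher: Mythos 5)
Your proof is correct and is essentially the same argument as the one the paper relies on (it defers to Lemma~3.1 of~\cite{Shc21}, whose proof is exactly this): reduce to two sets, transplant the sequence of cuts producing $B$ onto $A \subset K$ via the monotonicity of admissible cuts, check the transplanted cuts remain admissible, and observe the endpoint is $A \cap B$, then concatenate and induct on $l$.
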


\begin{proof} This follows directly from the above definition. For details we refer to Lemma~3.1 in~\cite{Shc21}. Its proof can be transferred word by word to our setting.
\end{proof}

\end{defn}

Now we can give the relation of $q$-nucleus and $q$-pseudoconcavity.

\begin{prop}\label{prop-qnuc-qpsc} The $q$-nucleus of $K \subsetneq \Mcal$ is $q$-pseudoconcave in $\Mcal$. Moreover, it is the maximal $q$-pseudoconcave subset of $K$. In particular, if $K$ is $q$-pseudoconcave itself, then $\nuc_q(K)=K$. 
\end{prop}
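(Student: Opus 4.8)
The plan is to prove three things in sequence: that $\nuc_q(K)$ is $q$-pseudoconcave, that it is \emph{contained} in $K$ and is $q$-pseudoconcave, and that it \emph{contains} every compact $q$-pseudoconcave subset of $K$; maximality then follows immediately, as does the final assertion. The key conceptual bridge is Proposition~\ref{prop-qpsc-caps}, which recasts $q$-pseudoconvexity of an open set in terms of spherical hat pairs of order $n-q+1$: an open $\Omega$ is $q$-pseudoconvex exactly when, whenever it contains a spherical hat $S$, it also contains the corresponding filled hat $\widehat{S}$. Dually, a closed set $A$ is $q$-pseudoconcave precisely when, for every spherical hat pair $(S,\widehat S)$ of order $n-q+1$ with $S \subset \Mcal \setminus A$ (equivalently $S$ avoiding $A$), the filling $\Int(\widehat S)$ also avoids $A$. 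This dual reformulation is the engine that connects spherical cuts directly to the defining property, since a spherical cut of $K'$ by $(S,\widehat S)$ is by definition the removal of $\Int(\widehat S)$ under the hypothesis $S \subset \Mcal \setminus K'$.

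\medskip

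\textbf{Step 1: $\nuc_q(K)$ is $q$-pseudoconcave.} I would argue by the dual hat criterion. Suppose $(S,\widehat S)$ is a spherical hat pair of order $n-q+1$ with $S \subset \Mcal \setminus \nuc_q(K)$; I must show $\Int(\widehat S) \cap \nuc_q(K) = \emptyset$. The subtlety is that $S$ avoids the \emph{intersection} $\nuc_q(K) = \bigcap_{K'' \in \Fcal_K^q} K''$ but need not avoid any single $K''$. Since $S$ is compact and each $K''$ is compact, a compactness/finite-intersection argument should produce finitely many members $K''_1,\dots,K''_l \in \Fcal_K^q$ whose intersection $K^* := \bigcap_j K''_j$ already satisfies $S \subset \Mcal \setminus K^*$; by the preceding Lemma, $K^* \in \Fcal_K^q$. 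Now I can perform the spherical cut of $K^*$ by $(S,\widehat S)$, obtaining $K^* \setminus \Int(\widehat S) \in \Fcal_K^q$, whence $\nuc_q(K) \subset K^* \setminus \Int(\widehat S)$, so $\nuc_q(K) \cap \Int(\widehat S) = \emptyset$. This is exactly the desired conclusion. The main obstacle here is making the finiteness reduction rigorous: I need that $S$ compact and disjoint from a nested-directed intersection of compacts forces disjointness from a finite subintersection, which uses that $\Fcal_K^q$ is closed under finite intersections (the Lemma) together with compactness of $S$.

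\medskip

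\textbf{Step 2: maximality.} Let $A \subset K$ be any compact $q$-pseudoconcave subset; I must show $A \subset \nuc_q(K)$, i.e. $A \subset K''$ for every $K'' \in \Fcal_K^q$. It suffices to show that $A$ survives every single spherical cut and hence every finite sequence of cuts, and I would do this by induction on the length $m$ of the sequence $K = K_1 \supset \cdots \supset K_m = K''$. The inductive step reduces to the claim that if $A \subset K_j$ and $K_{j+1} = K_j \setminus \Int(\widehat S)$ is a spherical cut by $(S,\widehat S)$ with $S \subset \Mcal \setminus K_j$, then still $A \subset K_{j+1}$, i.e. $A \cap \Int(\widehat S) = \emptyset$. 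Since $A \subset K_j$ we have $S \subset \Mcal \setminus K_j \subset \Mcal \setminus A$, so $S$ avoids $A$; because $A$ is $q$-pseudoconcave, the dual hat criterion applied to $(S,\widehat S)$ forces $\Int(\widehat S) \cap A = \emptyset$, which is precisely what the step needs. Thus $A \subset K''$ for all $K''$, giving $A \subset \nuc_q(K)$.

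\medskip

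\textbf{Conclusion.} Step~1 shows $\nuc_q(K)$ is $q$-pseudoconcave and contained in $K$ (it is an intersection of subsets of $K$), and Step~2 shows it dominates every compact $q$-pseudoconcave subset of $K$, so it is the maximal such subset. For the final clause, if $K$ is itself $q$-pseudoconcave then $K$ is a compact $q$-pseudoconcave subset of $K$, so by maximality $K \subset \nuc_q(K)$; the reverse inclusion is automatic, giving $\nuc_q(K) = K$. I expect the genuinely delicate point to be the compactness reduction in Step~1; everything else is a clean application of the hat reformulation of Proposition~\ref{prop-qpsc-caps}, together with the finite-intersection Lemma.
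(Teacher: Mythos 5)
Your proposal is correct and follows essentially the same route as the paper's proof: the paper likewise uses compactness together with the finite-intersection Lemma to produce a single $K' \in \Fcal_K^q$ disjoint from $S$, performs the cut $K'' = K' \setminus \Int(\widehat{S})$ to force $\nuc_q(K) \cap \Int(\widehat{S}) = \emptyset$, and establishes maximality by the identical induction over spherical cuts, applying Proposition~\ref{prop-qpsc-caps} to get $\widehat{S} \cap A = \emptyset$ from $S \subset \Mcal \setminus K_j \subset \Mcal \setminus A$. The only differences are presentational (you argue the first part in contrapositive form where the paper argues by contradiction at a chosen point $p \in \widehat{S} \cap \nuc_q(K)$), and your final deduction of $\nuc_q(K) = K$ for $q$-pseudoconcave $K$ coincides with the paper's.
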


\begin{proof} If the $q$-nucleus is empty, there is nothing to show, so we assume it is non-empty. Assume that $\nuc_q(K)$ is not $q$-pseudoconcave. 
According to Proposition~\ref{prop-qpsc-caps} there exists a spherical hat pair $(S,\widehat{S})$ of order $n{-}q{+}1$ such that

\begin{itemize}

\item $S \cap \nuc_q(K)=\emptyset$,

\item but $\widehat{S} \cap \nuc_q(K) \neq \emptyset$.

\end{itemize}

Pick a point $p \in \widehat{S} \cap \nuc_q(K)$. Then, in view of Lemma~3.3 and the definition of the $q$-nucleus, there exists a set $K'$ in $\Fcal_K^q$ such that

\begin{itemize}

\item $S \cap K'=\emptyset$,

\item but $\widehat{S} \cap K' \neq \emptyset$. More precisely, this intersection contains $p$.

\end{itemize}

We set $K'':= K' \setminus \Int(\widehat{S})$. It is obvious that $p \notin K''$ and that $K''$ is obtained from $K'$ by a spherical cut of order $n{-}q{+}1$. Since $K'$ is obtained from $K$ by a sequence of spherical cuts of order $n{-}q{+}1$, so is $K''$ as well. This means that $K''$ lies in $\Fcal_K^q$. Hence, by the definition of the $q$-nucleus and the choice of $p$, we have that $p \in \nuc_q(K) \subset K''$, so $p \in K''$, a contradiction. Therefore, $\nuc_q(K)$ is $q$-pseudoconcave.

Now assume that there exists a $q$-pseudoconcave set $A \subset K$. Let $K''\subsetneq\Mcal$ be a compact sets in $\Mcal$ such that $K''$ is obtained from $K$ by a spherical cut of order $n{-}q{+}1$, i.e., there exists a spherical hat pair $(S,\widehat{S})$ of order $n{-}q{+}1$ such that $S \subset \Mcal\setminus K$ and $K''=K \setminus \Int(\widehat{S})$. Since $S \subset \Mcal\setminus K \subset \Mcal\setminus A$ and $A$ is $q$-pseudoconcave, $\widehat{S} \cap A = \emptyset$ due to Proposition~\ref{prop-qpsc-caps}. Hence, $A$ lies in $K''$. Similarly, by an inductive argument, we can derive that if $K''$ is obtained from $K$ by a \emph{sequence} of spherical cuts of order $n{-}q{+}1$, then $A \subset K''$ as well. But this means that $A \subset \nuc_q(K)$ according to the definition of the $q$-nucleus of $K$. In this sense, the $q$-nucleus is maximal. It contains all $q$-pseudoconcave subsets of $K$. So if $K$ is $q$-pseudoconcave itself, then $K \subset \nuc_q(K) \subset K$. Thus, $\nuc_q(K)=K$.

\end{proof}

\begin{cor}\label{collect-qpsc-nucleus} Let $\Pcal_q(K)$ denote the collection of all $q$-pseudoconcave subsets of $K \subsetneq \Mcal$ and $\Pcal_q^*(K)$ the collection of all open $q$-pseudoconvex sets $U$ in $\Mcal$ such that $K \cap U = \emptyset$. Then
\[
\nuc_q(K) = \bigcup_{A \in \Pcal_q(K)} A,
\]
or, equivalently,
\[
\Mcal\setminus\nuc_q(K) = \bigcap_{U \in \Pcal_q^*(K)} U.
\]
In particular, $\Mcal\setminus\nuc_q(K)$ is $q$-pseudoconvex in $\Mcal$.
\end{cor}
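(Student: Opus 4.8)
The plan is to read both identities off Proposition~\ref{prop-qnuc-qpsc}, which already contains all the substance: the $q$-nucleus is a $q$-pseudoconcave subset of $K$ and it contains every other $q$-pseudoconcave subset of $K$. The first displayed equality is then precisely the assertion that $\nuc_q(K)$ is the \emph{largest} member of $\Pcal_q(K)$, and the second is its complement (De Morgan) form. Before starting I would record the elementary but needed fact that $\nuc_q(K)\subset K$: the trivial one-term sequence of cuts shows $K\in\Fcal_K^q$, so the intersection defining $\nuc_q(K)$ is taken over a family of sets one of which is $K$ itself.

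For the first equality I would argue by two inclusions. On one hand, by Proposition~\ref{prop-qnuc-qpsc} the set $\nuc_q(K)$ is $q$-pseudoconcave, and by the observation above it is contained in $K$; hence $\nuc_q(K)\in\Pcal_q(K)$ and therefore $\nuc_q(K)\subset\bigcup_{A\in\Pcal_q(K)}A$. On the other hand, the maximality clause of Proposition~\ref{prop-qnuc-qpsc} asserts that every $q$-pseudoconcave $A\subset K$ satisfies $A\subset\nuc_q(K)$, so $\bigcup_{A\in\Pcal_q(K)}A\subset\nuc_q(K)$. Combining the two inclusions gives the claimed identity $\nuc_q(K)=\bigcup_{A\in\Pcal_q(K)}A$. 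The case of an empty nucleus is automatic, since $\emptyset$ is vacuously $q$-pseudoconcave and hence lies in $\Pcal_q(K)$.

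For the second equality I would pass to complements. The key observation is that $A\mapsto\Mcal\setminus A$ is a bijection between $\Pcal_q(K)$ and $\Pcal_q^*(K)$: by the definition of $q$-pseudoconcavity a closed set $A$ is $q$-pseudoconcave exactly when the open set $U=\Mcal\setminus A$ is $q$-pseudoconvex, while the inclusion $A\subset K$ corresponds to the condition $\Mcal\setminus K\subset U$ imposed on $U=\Mcal\setminus A$ in the description of $\Pcal_q^*(K)$. Applying $\Mcal\setminus(\,\cdot\,)$ to the first identity and using the set-theoretic duality then yields $\Mcal\setminus\nuc_q(K)=\bigcap_{A\in\Pcal_q(K)}(\Mcal\setminus A)=\bigcap_{U\in\Pcal_q^*(K)}U$, which is the second displayed identity.

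Finally, the concluding assertion that $\Mcal\setminus\nuc_q(K)$ is $q$-pseudoconvex requires no further work: Proposition~\ref{prop-qnuc-qpsc} already states that $\nuc_q(K)$ is $q$-pseudoconcave, and by definition this means exactly that its complement is $q$-pseudoconvex. I do not expect any genuine obstacle here. The only points deserving care are the set-theoretic bookkeeping in the complement bijection and the verification that $\nuc_q(K)\subset K$ (so that $\nuc_q(K)$ really qualifies as a member of $\Pcal_q(K)$); all the analytic content is inherited from Proposition~\ref{prop-qnuc-qpsc}, which is why this statement is phrased as a corollary.
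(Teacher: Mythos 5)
Your argument is correct and coincides with the paper's own treatment: the authors state this corollary without proof, as an immediate consequence of Proposition~\ref{prop-qnuc-qpsc}, and your two inclusions plus the passage to complements are precisely the details they leave tacit (including the preliminary observation that $K\in\Fcal_K^q$ via the trivial one-term sequence of cuts, so that $\nuc_q(K)\subset K$ and hence $\nuc_q(K)\in\Pcal_q(K)$).

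One point deserves flagging, though. You assert that the defining condition of $\Pcal_q^*(K)$ is $\Mcal\setminus K\subset U$, but the corollary as printed says ``$K\cap U=\emptyset$'', which is a different (in fact incompatible) condition: it forces every $U\in\Pcal_q^*(K)$ to lie in $\Mcal\setminus K$, so $\bigcap_{U\in\Pcal_q^*(K)}U$ would miss $K$ entirely, whereas $\Mcal\setminus\nuc_q(K)$ contains $K\setminus\nuc_q(K)$, which is nonempty whenever the nucleus is a proper subset of $K$ (concretely, for $K$ a closed ball in $\CC^n$ one has $\nuc_q(K)=\emptyset$, so the left-hand side is all of $\CC^n$, while two disjoint $q$-pseudoconvex balls avoiding $K$ already make the right-hand side fail to be everything). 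The printed condition is evidently a misprint for $\Mcal\setminus K\subset U$, equivalently $\Mcal\setminus U\subset K$, which is exactly what your bijection $A\mapsto\Mcal\setminus A$ between $\Pcal_q(K)$ and $\Pcal_q^*(K)$ requires. So your proof establishes the corrected statement --- the right mathematics --- but as written it silently misquotes the paper's definition of $\Pcal_q^*(K)$ rather than noting the correction; you should make that emendation explicit.
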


The $q$-nucleus is a biholomorphic invariant in the following sense.

\begin{rem} 

Let $K$ be a compact set in $\Mcal$. For a given biholomorphic map $\Phi:\Mcal\to\Ncal$, where $\Ncal$ is another complex manifold, and for a compact set $K$ in $\Mcal$, we have that
\[
\Phi(\nuc_q(K)) = \nuc_q(\Phi(K)).
\]
This immediately follows from Corollary~\ref{collect-qpsc-nucleus} and from the fact that an open set $U$ in $\Mcal$ is $q$-pseudoconvex if and only $\Phi(U)$ is $q$-pseudoconvex in $\Phi(\Mcal)=\Ncal$.

\end{rem}

We give an example of the $q$-nucleus in the projective manifold.

\begin{ex}\label{q-nuc-cpn} Consider the embedded $\CC\PP^q$ in $\CC\PP^n$,
\[
\CC\PP^q=\{[z_0,\ldots,z_n] \in \CC\PP^n : z_{q+1}=\ldots=z_n=0\}.
\]
By computing the eigenvalues of
\[
\rho([z_0,\ldots,z_n])=\frac{|z_0|^2+\ldots+|z_q|^2}{|z_{q+1}|^2+\ldots+|z_n|^2}
\]
using local charts, we find that $\rho$ is an $(n-q)$-convex exhaustion function for $\CC\PP^n\setminus\CC\PP^q$. By Proposition~\ref{prop-q-cvx-psc}, the set $\CC\PP^q$ is a $q$-pseudoconcave compact subset of $\CC\PP^n$. Therefore, we can conclude that one has $\nuc_q(\CC\PP^q)=\CC\PP^q$ in $\CC\PP^n$. 
\end{ex}

Together with Corollary~\ref{cor-exist-q-convex} we obtain the following result. It states that the manifold cannot be too \emph{nice} in the geometric and holomorphic sense and that $K$ has to be large enough in order to produce a proper $q$-nucleus.

\begin{cor} \label{cor-empty-q-nucleus} The following two properties hold true:
\begin{enumerate}

\item If $\Mcal$ is $q$-complete (such as Stein in the case $q=1$), then $\nuc_q(K)$ is empty for any compact set $K$ in $\Mcal$. 

\item If $K$ is contained in a local holomorphic chart of $\Mcal$, then $\nuc_q(K)=\emptyset$.

\end{enumerate}
\end{cor}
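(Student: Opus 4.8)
\textbf{Proof proposal for Corollary~\ref{cor-empty-q-nucleus}.}

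The plan is to derive both statements directly from the characterization of the $q$-nucleus as the maximal $q$-pseudoconcave subset (Proposition~\ref{prop-qnuc-qpsc}) combined with the non-existence results in Corollary~\ref{cor-exist-q-convex}. The key observation is that each part reduces to showing that $K$ contains \emph{no} non-empty compact $q$-pseudoconcave subset, because the $q$-nucleus is precisely the union of all such subsets.

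For part~(1), suppose $\Mcal$ is $q$-complete with corners (in particular, $q$-complete, which includes the Stein case when $q=1$). First I would invoke Proposition~\ref{prop-qnuc-qpsc} to recall that $\nuc_q(K)$ is itself a compact $q$-pseudoconcave subset of $\Mcal$. Then I would apply part~(2) of Corollary~\ref{cor-exist-q-convex}: since $\Mcal$ admits a $q$-convex with corners exhaustion function, every compact $q$-pseudoconcave set in $\Mcal$ must be empty. Applying this to the $q$-pseudoconcave set $\nuc_q(K)$ gives $\nuc_q(K)=\emptyset$ immediately. The Stein special case for $q=1$ is already covered by the remark in Corollary~\ref{cor-exist-q-convex} that a Stein manifold carries a smooth strictly plurisubharmonic exhaustion function, which is $q$-convex for every $q$.

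For part~(2), suppose $K$ lies in a local holomorphic chart, so after a biholomorphism we may regard $K$ as a compact subset of an open set in $\CC^n$, or indeed of a ball $\BB_R^n(0)$. The strategy here is to use the biholomorphic invariance of the $q$-nucleus (the Remark following Corollary~\ref{collect-qpsc-nucleus}) to transport the problem into $\CC^n$, and then to observe that $\CC^n$ admits the strictly plurisubharmonic exhaustion function $\|z\|^2$, which is $1$-convex and hence $q$-convex. By part~(1) applied to $\CC^n$ (or directly by Corollary~\ref{cor-exist-q-convex}, since $\|z\|^2$ restricts to a $q$-convex function in a neighborhood of any compact $q$-pseudoconcave subset), the $q$-nucleus of the image of $K$ in $\CC^n$ is empty, whence $\nuc_q(K)=\emptyset$ by invariance.

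The main obstacle I anticipate is a small bookkeeping subtlety in part~(2): the $q$-nucleus and $q$-pseudoconvexity are defined relative to the ambient manifold $\Mcal$, not relative to the chart. I must be careful that a $q$-pseudoconcave subset of $K$ computed \emph{in $\Mcal$} restricts correctly to a $q$-pseudoconcave subset \emph{in the chart}, so that the Euclidean argument applies. This is handled cleanly by the biholomorphic invariance stated in the Remark, since the chart is a biholomorphism onto an open subset of $\CC^n$ and $q$-pseudoconvexity (hence $q$-pseudoconcavity) is a local biholomorphic invariant; the emptiness of the nucleus is not affected by enlarging the ambient space from the chart's image to all of $\CC^n$, because any $q$-pseudoconcave subset of the compact $K$ stays within the chart and its emptiness is detected there.
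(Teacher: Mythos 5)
Your proposal is correct and follows the paper's own (implicit) argument: the corollary is obtained exactly by combining Proposition~\ref{prop-qnuc-qpsc} (the $q$-nucleus is a compact $q$-pseudoconcave subset of $K$) with Corollary~\ref{cor-exist-q-convex}, where for part~(2) the pullback of $\|z\|^2$ through the chart supplies the required $q$-convex function near $\nuc_q(K)$, making your detour through invariance of the nucleus unnecessary though harmless, since you patch the chart-versus-$\Mcal$ issue correctly via locality of $q$-pseudoconcavity. One small caveat: in part~(1) your parenthetical inverts the implication --- the hypothesis \emph{$q$-complete} yields a smooth $q$-convex exhaustion, which is a fortiori $q$-convex with corners, so $\Mcal$ is $q$-complete with corners (not conversely); the step you actually apply, part~2 of Corollary~\ref{cor-exist-q-convex}, is nevertheless the right one.
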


Now we deal with the problem how to construct a $q$-convex function with corners in the neighborhood of $K$ in the case when the $q$-nucleus of $K$ is empty. First, we need the following lemma.

\begin{lem}\label{lem-glue-qcvx} Let $K$ be a compact set in $\Mcal$ and $V_1, V_2$ a pair of open sets in $\Mcal$ such that $K$ is contained in their union $V_1 \cup V_2$. Assume that there are two (weakly) $q$-convex functions with corners $\varphi_j$ defined on some neighbourhoods of $\overline{V_j}$ for $j=1,2$ such that $\varphi_1>\varphi_2$ on $\partial V_2 \cap V_1 \cap K$ and $\varphi_1<\varphi_2$ on $\partial V_1 \cap V_2 \cap K$. Define
\[
\Psi:=\left\{ \begin{array}{l} 
\varphi_1 \ \hbox{on}\ V_1\setminus V_2 \\
\varphi_2 \ \hbox{on}\ V_2\setminus V_1 \\
\max\{\varphi_1,\varphi_2\} \ \hbox{on}\ V_1 \cap V_2
\end{array}\right.
\]
Then there is a neighborhood $W$ of $K$ in $V_1 \cup V_2$ such that $\Psi$ is (weakly) $q$-convex with corners on~$W$.
\end{lem}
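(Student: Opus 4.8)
The plan is to exploit that being (weakly) $q$-convex with corners is a purely local property, and that the maximum of finitely many (weakly) $q$-convex functions---indeed of finitely many (weakly) $q$-convex functions \emph{with corners}, since a maximum of maxima is again a maximum over the union of the underlying finite families---is again of this type. Hence it suffices to produce, for every point $p \in K$, an open neighborhood $N_p \subset V_1 \cup V_2$ of $p$ on which $\Psi$ coincides with one of $\varphi_1$, $\varphi_2$, or $\max\{\varphi_1,\varphi_2\}$. Near such a point each of these is, by definition of ``with corners,'' locally a finite maximum of plain (weakly) $q$-convex functions, so the same holds for $\Psi$. The desired set will then be $W := \bigcup_{p \in K} N_p$, an open neighborhood of $K$ inside $V_1 \cup V_2$ on which $\Psi$ is locally a maximum of finitely many (weakly) $q$-convex functions, and therefore (weakly) $q$-convex with corners by locality.

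Next I would split into cases according to the position of $p$ relative to $V_1$ and $V_2$, and check that these cases are exhaustive over $K$. Since $V_1, V_2$ are open we have $\partial V_j \cap V_j = \emptyset$, and because $K \subset V_1 \cup V_2$ every $p \in K$ lies in $V_1$ or in $V_2$; in particular no point of $K$ can lie in $\partial V_1 \cap \partial V_2$. The straightforward cases are three: if $p \in V_1 \setminus \overline{V_2}$, choose $N_p \subset V_1 \setminus \overline{V_2}$, where $\Psi = \varphi_1$; symmetrically if $p \in V_2 \setminus \overline{V_1}$, where $\Psi = \varphi_2$; and if $p \in V_1 \cap V_2$, choose $N_p \subset V_1 \cap V_2$, where $\Psi = \max\{\varphi_1,\varphi_2\}$ by the defining formula. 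In each of these the restriction of $\Psi$ to $N_p$ is manifestly (weakly) $q$-convex with corners.

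The heart of the argument, and the only place where the strict-inequality hypotheses enter, is the two boundary cases. Suppose $p \in \partial V_2 \cap V_1 \cap K$ (the case $p \in \partial V_1 \cap V_2 \cap K$ being symmetric). Since $p \in \partial V_2 \subset \overline{V_2}$ and $p \in V_1 \subset \overline{V_1}$, both $\varphi_1$ and $\varphi_2$ are defined and continuous near $p$, and by assumption $\varphi_1(p) > \varphi_2(p)$. By continuity there is a neighborhood on which $\varphi_1 > \varphi_2$; intersecting it with the open set $V_1$ produces $N_p \ni p$ with $N_p \subset V_1$ and $\varphi_1 > \varphi_2$ throughout $N_p$. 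Every point of $N_p$ then lies in $V_1 \setminus V_2$ or in $V_1 \cap V_2$, and in either case the defining formula for $\Psi$ together with $\varphi_1 > \varphi_2$ forces $\Psi = \varphi_1$ on $N_p$, so $\Psi$ is (weakly) $q$-convex with corners there. The main obstacle is precisely this step: one must rule out that the ``wrong'' branch of the piecewise definition contributes near a boundary point, which is exactly what continuity of $\varphi_1, \varphi_2$ and the prescribed strict sign of $\varphi_1 - \varphi_2$ on $\partial V_2 \cap V_1 \cap K$ (respectively $\partial V_1 \cap V_2 \cap K$) guarantee. Once all the $N_p$ are constructed, the case analysis covers $K$, and $W = \bigcup_{p \in K} N_p$ is the required neighborhood.
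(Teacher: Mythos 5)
Your proof is correct and follows essentially the same route as the paper's: in both arguments the strict inequalities on the compact seams $\partial V_2 \cap V_1 \cap K$ and $\partial V_1 \cap V_2 \cap K$ are propagated by continuity into open neighborhoods where the maximum branch collapses to a single $\varphi_j$, so that the piecewise definition of $\Psi$ glues, and one concludes by locality of the ``with corners'' notion. The only cosmetic difference is that you argue pointwise (a neighborhood $N_p$ for each $p \in K$) while the paper works with uniform tubular neighborhoods $U_1, U_2$ of the compact sets $M_1, M_2$ at once; these are interchangeable.
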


\begin{proof} It is obvious that $\max\{\varphi_1,\varphi_2\}$ is (weakly) $q$-convex with corners on $V_1\cap V_2$ simply by definition. We show that it extends according to the definition of $\Psi$ outside $K \cap V_1 \cap V_2$. For this, consider the compact sets $M_1:= \partial V_2 \cap V_1 \cap K$ and $M_2:= \partial V_1 \cap V_2 \cap K$. By the assumptions made on $\varphi_1$ and $\varphi_2$, there is an open neighborhood $U_1$ of $M_1$ in $V_1 \cup V_2$ such that $\varphi_1>\varphi_2$ on $U_1 \cap V_1$. Thus, $\max\{\varphi_1,\varphi_2\}=\varphi_1$ on $U_1 \cap V_1$ and easily extends by $\varphi_1$ into some open neighborhood $W_1$ of $K \setminus V_2$ to a $q$-convex function with corners. By the same arguments, we can extend $\max\{\varphi_1,\varphi_2\}$ by $\varphi_2$ into some open neighborhood $W_2$ of $K \setminus V_1$. Now denote by $\Psi$ the above constructed extension of $\max\{\varphi_1,\varphi_2\}$ into the open neighborhood $W=W_1 \cup W_2 \cup (V_1 \cap V_2)$ of $K$.
\end{proof}

\begin{prop}\label{hat-q-convex} 

Let $(S,\widehat{S})$ be a spherical hat pair of order $n-q+1$ in $\Mcal$. Then there is a neighborhood $U$ of the filled hat $\widehat{S}$ and a weakly $q$-convex function $\varphi$ on $U\setminus S$ such that $\varphi$ vanishes on $U\setminus \widehat{S}$ and $\varphi$ is positive and $q$-convex on the filling $\Int(\widehat{S})$.

\end{prop}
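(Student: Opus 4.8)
The plan is to build the function in the local model supplied by the chart $\Phi$ and then transport it, using that the number of negative, zero, and positive eigenvalues of the Levi form is preserved under a biholomorphic change of coordinates (the complex Hessian transforms by Hermitian congruence $A\mapsto J^{*}AJ$ with $J$ the holomorphic Jacobian, so Sylvester's law applies), exactly the invariance already invoked for Corollary~\ref{cor-exist-q-convex}. Write $k:=n-q+1$, so $\widehat{S}=\Phi(\widehat{\SS}_r^{k}\times\Delta^{q-1})$, and split the coordinates of $\CC^{n}$ as $z=(z',z'')$ with $z'=(z_1,\dots,z_k)$ and $z''=(z_{k+1},\dots,z_n)\in\CC^{q-1}$. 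First I would choose the neighborhood economically: by the tube lemma there is an open $W\supset\widehat{\SS}_r^{k}$ in $\CC^{k}$ with $W\times\overline{\Delta^{q-1}}$ inside the domain of $\Phi$, and I set $U:=\Phi(W\times\Delta^{q-1})$. Keeping the second factor equal to $\Delta^{q-1}$ is the point: then $U\setminus\widehat{S}=\Phi\big((W\setminus\widehat{\SS}_r^{k})\times\Delta^{q-1}\big)$, so the required vanishing only has to be arranged in the $z'$-variables and the whole construction may be taken independent of $z''$.

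Thus it suffices to produce one function $\psi=\psi(z')$ on $W\setminus\SS_r^{k}$ which vanishes off $\widehat{\SS}_r^{k}$, is positive on $\Int(\widehat{\SS}_r^{k})=\{\|z'\|<1,\ \Repa{z_1}>r\}$, and whose complex Hessian in $z'$ is positive semidefinite throughout and positive definite on $\Int(\widehat{\SS}_r^{k})$; then $\varphi:=(\psi\circ\pi)\circ\Phi^{-1}$, with $\pi$ the projection onto $z'$, does the job. Indeed a function independent of $z''$ has block-diagonal Levi form $\operatorname{diag}(L_\psi,0_{q-1})$: where $L_\psi\succeq0$ it has no negative eigenvalue (so it is even weakly $1$-convex, hence weakly $q$-convex), and where $L_\psi\succ0$ its only non-positive eigenvalues are the $q-1$ zeros from the $z''$-block (so it is $q$-convex). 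My candidate is
\[
\psi(z')=\chi\big(\Repa{z_1}-r\big)\,\big(1+\eps\|(z_2,\dots,z_k)\|^2\big)\ \text{ for }\|z'\|<1,\qquad \psi(z')=0\ \text{ for }\|z'\|\ge1,
\]
where $\chi(t)=\max\{t,0\}^3$ (smooth, convex, increasing, vanishing to second order at $0$) and $\eps>0$ is small.

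The verification has three routine-but-essential ingredients. First, $C^{2}$-regularity on $W\setminus\SS_r^{k}$: across the flat face $\{\Repa{z_1}=r\}$ the $\chi$-term glues $C^{2}$ to $0$ because $\chi$ vanishes to second order, while across the sphere the only discontinuity sits on the cap $\{\|z'\|=1,\ \Repa{z_1}\ge r\}=\SS_r^{k}$, which is removed; on the remaining sphere points $\Repa{z_1}<r$ forces $\psi\equiv0$ in a full neighborhood, so no matching is needed there. Second, semidefiniteness is clear, since $L_\psi=0$ wherever $\chi$ and its derivatives vanish. Third, positive definiteness on the open hat: as $\psi$ is constant in $z''$, the $z'$-Hessian is explicit, with $L_{11}=\tfrac14\chi''(1+\eps\|w\|^2)$, $L_{jj}=\eps\chi$ and $L_{1j}=\tfrac{\eps}{2}\chi' z_j$ for $j\ge2$, where $w=(z_2,\dots,z_k)$; the Schur complement with respect to the lower block reduces positivity to $\chi''(1+\eps\|w\|^2)>\eps\,(\chi')^2\|w\|^2/\chi$, and with $\chi(t)=t^{3}$ this reduces to $2-\eps\|w\|^2>0$, which holds on the hat (where $\|w\|^2\le\|z'\|^2<1$) as soon as $\eps<2$.

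I expect the main obstacle to be conceptual rather than computational: one cannot simply demand a plurisubharmonic $\psi$ that is positive on the open filled hat and zero on its complement, for such a $\psi$ would have to vanish on an open set abutting the spherical — hence, from outside, pseudoconcave — part of $\partial\widehat{\SS}_r^{k}$, which is impossible for a function that is strictly plurisubharmonic next door. The construction hinges on the hypothesis that the cap $S=\SS_r^{k}\times\Delta^{q-1}$ is deleted from the domain: this is exactly what licenses the ball cutoff $\{\|z'\|\ge1\}\mapsto0$ as a genuine discontinuity across $S$, leaving only the Levi-flat face $\{\Repa{z_1}=r\}$ and the trivial region $\{\Repa{z_1}<r\}$ to be matched. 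The secondary technical point is the Schur-complement estimate guaranteeing that the transverse factor $1+\eps\|w\|^2$ — which upgrades the rank-one Hessian of $\chi(\Repa{z_1}-r)$ to a full-rank one in all $k$ directions — does not destroy positivity near the flat face.
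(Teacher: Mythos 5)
Your proof is correct, and while it follows the paper's overall strategy (build a model function in the $z'$-coordinates of the chart and push it forward by $\Phi^{-1}$, with the removed cap $S$ licensing the jump discontinuity of the cutoff), it differs in the key technical ingredient. The paper does not construct the model function by hand: for $q=1$ it cites Corollary~2.1 of~\cite{Shc21}, and for $q>1$ it imports from Theorem~2.1 of~\cite{Shc21} a smooth plurisubharmonic $\tilde{\varphi}$ on a domain $W \supset \BB_1^{n-q+1}(0)$ vanishing exactly on the half-space $\Pi_-=\{\Repa{z_1}\leq 0\}$ and \emph{strictly} plurisubharmonic off it — a nontrivial result — and then sets $\psi_r(z)=(1-\|z''\|^2)\,\tilde{\varphi}(z'-r\cdot 1')$ with cutoff by zero outside $\widehat{\SS}_r^{n-q+1}\times\Delta^{q-1}$, counting eigenvalues by restricting the Levi form to the $z'$-subspace (min--max): the factor $(1-\|z''\|^2)$ buys decay in the $z''$-directions at the price of up to $q-1$ negative eigenvalues, which the definition of weak $q$-convexity tolerates. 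You instead give a completely self-contained elementary function $\chi(\Repa{z_1}-r)\bigl(1+\eps\|(z_2,\ldots,z_k)\|^2\bigr)$, independent of $z''$, and verify positivity by the Schur-complement computation (which I checked: with $\chi(t)=\max\{t,0\}^3$ the condition is indeed $2-\eps\|w\|^2>0$, satisfied on the hat for $\eps<2$); you sidestep the $z''$-boundary issue that motivates the paper's $(1-\|z''\|^2)$ factor by keeping the full $\Delta^{q-1}$ factor in $U=\Phi(W\times\Delta^{q-1})$, so that $U\setminus\widehat{S}$ fibers over $W\setminus\widehat{\SS}_r^k$ and the vanishing is arranged purely in $z'$. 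Your route is more elementary, avoids the external citation entirely, and handles $q=1$ and $q>1$ uniformly; the paper's route is shorter on the page and its imported $\tilde{\varphi}$ is a stronger model object (strictly plurisubharmonic on all of $W\setminus\Pi_-$), though that extra strength is not needed once the ball cutoff is in place. One cosmetic point: $\max\{t,0\}^3$ is $\Ccal^2$ but not $\Ccal^\infty$, so your parenthetical ``smooth'' is an overstatement — harmless here, since the paper's definition of (weakly) $q$-convex requires only $\Ccal^2$ regularity, but if smoothness were wanted one could take $\chi(t)=e^{-1/t}$ for $t>0$ instead.
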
 

\begin{proof} In the case $q=1$, the result is covered by Corollary~2.1 in \cite{Shc21}, so we continue with the case $q>1$. According to Theorem~2.1 of the same paper there exists a domain $W$ in $\CC^{n-q+1}$ and a smooth plurisubharmonic function $\tilde{\varphi}:W\to\RR$ such that
\begin{enumerate}
 
\item[i.] $\Pi_-:=\{z=(z_1,\ldots,z_{n-q+1}) \in \CC^{n-q+1} : \Repa{z_1}\leq 0\} \subset W$ and $\BB_1^{n-q+1}(0) \subset W$,

\item[ii.] $\tilde{\varphi}= 0$ on $\Pi_-$,

\item[iii.] $\tilde{\varphi}>0$ and strictly plurisubharmonic on $W\setminus\Pi_-$,

\end{enumerate}

Denote by $z'=(z_1,\ldots,z_{n-q+1}) \in \CC^{n-q+1}$, $z''=(z_{n-q+2},\ldots,z_n) \in \CC^{q-1}$ and $1'=(1,0,\ldots,0) \in \CC^{n-q+1}$. Thus, $z=(z',z'') \in \CC^n$. For $r>0$, we define the function $\psi_r$ as follows,
\[
\psi_r(z):= \left\{ \begin{array}{cl}(1-\|z''\|^2)\tilde{\varphi}\big( z'-r\cdot 1'\big), & \text{for}\ z \in \operatorname{Int}\big(\widehat{\SS}_r^{n-q+1}\times\Delta^{q-1}\big)\\
0, & \text{for}\ z \in \CC^n\setminus (\widehat{\SS}_r^{n-q+1}\times\Delta^{q-1}) \end{array} \right\}.
\]
Then $\psi_r$ has at least $n-q+1$ non-negative (in $z'$-direction) and, thus, at most $q-1$ negative eigenvalues. Hence, it is weakly $q$-convex everywhere where it is defined. Moreover, for every $0<r<1$ it fulfills

\begin{enumerate}

\item $\psi_r \equiv 0$ on $\CC^n\setminus (\widehat{\SS}_r^{n-q+1}\times\Delta^{q-1})$.

\item $\psi_r$ is positive and $q$-convex on the interior of $\widehat{\SS}_r^{n-q+1}\times\Delta^{q-1}$.

\end{enumerate}

Now we simply set $\varphi:=\psi_r \circ \Phi^{-1}$ for the proper choice of $r$, where $\Phi$ is the function from Definition~\ref{defn-qpsc-hats}. It is easy to see that $\varphi$ fulfills all the properties posed in the statement.

\end{proof}

We are now able to formulate and prove the central result of this paper which is given by the following statement. 

\begin{mthmn}\label{main-theorem} Let $K$ be a compact set in $\Mcal$.

\begin{enumerate}

\item If there is a $q$-convex function with corners defined on a neighborhood of $K$, then $\nuc_q(K)$ is empty. In particular, $K$ does not contain any $q$-pseudoconcave subset. 

\item If $\nuc_q(K)$ is empty, then there exists a (positive) $q$-convex function with corners defined on a neighborhood of $K$. 

\end{enumerate}

\end{mthmn}

\begin{proof} 1. Let $\psi$ be a $q$-convex function with corners on a neighborhood of $K$. Notice that $K \subsetneq \Mcal$ due to point~3 in Remark~\ref{rem-qvx-fcts}. Assume that $\nuc_q(K)$ is not empty. Since $\nuc_q(K)$ lies in $K$, the function $\psi$ is also defined in a neighborhood of $\nuc_q(K)$. 
Since the $q$-nucleus is compact and $q$-pseudoconcave, the first part of Corollary \ref{cor-exist-q-convex} applied to $\nuc_q(K)$ (instead of $K$ there) will lead us to a contradiction. This, together with Corollary~\ref{collect-qpsc-nucleus}, proves the first part of our Main Theorem.

\medskip

2. We are able to follow mainly the arguments in the second part of the proof of the Main Theorem in~\cite{Shc21} except for the fact that we have to take maximums instead of sums in the last step. For the readers convenience we shall give a detailed proof here.

Since $\nuc_q(K)$ is empty, we can find a sequence $K_1 \supset K_2 \supset \ldots \supset K_{m-1} \supset K_m$ such that $K_1=K$, $K_m=\emptyset$ and $K_{j} $ is obtained from $K_{j-1}$ by a spherical cut of order $n{-}q{+}1$. We proceed inductively in order to construct for each $j=m-1, m-2,\ldots,1$ an open neighborhood $U_{j}$ of $K_j$ and a positive $q$-convex function $\Psi_j$ with corners  on $U_j$.

We start with the base case. We know that $K_{m}$ is obtained from $K_{m-1}$ by a spherical cut of order $n-q+1$, i.e., there exists a spherical hat pair $(S,\widehat{S})$ of order $n-q+1$ such that $S \subset \Mcal \setminus K_{m-1}$ and $K_{m} = K_{m-1}\setminus \Int(\widehat{S})$ . By Proposition~\ref{hat-q-convex} there exist a neighborhood $U$ of the filled hat $\widehat{S}$ and a weakly $q$-convex function $\Psi_{m-1}:=\varphi_{m-1}$ on $U\setminus S$ such that $\varphi_{m-1}$ vanishes on $U\setminus \widehat{S}$ and is positive and $q$-convex on $U_{m-1}:=\Int(\widehat{S})$. 

We proceed with the induction step and assume that for some $j \in \{2, 3 , \ldots , m - 1\}$ we have already constructed an open neighborhood $U_j$ of $K_j$ and a positive $q$-convex function $\Psi_j$ with corners on $U_j$. We set $V_1:=U_j$. Since $K_j$ is obtained from $K_{j-1}$ by a spherical cut of order $n-q+1$, there exists a spherical hat pair $(S,\widehat{S})$ of order $n-q+1$ such that $S \subset \Mcal \setminus K_{j-1}$ and $K_{j} = K_{j-1}\setminus \Int(\widehat{S})$. 

By Proposition~\ref{hat-q-convex} there exist a neighborhood $U$ of the filled hat $\widehat{S}$ and a weakly $q$-convex function $\Psi_{j-1}:=\varphi_{j-1}$ on $U\setminus S$ such that $\varphi_{j-1}$ vanishes on $U\setminus \widehat{S}$ and is positive and $q$-convex on the filling $V_2:=\Int(\widehat{S})$. Observe that $K_{j-1}$ is contained in $V_1 \cup V_2$. 

Since $\varphi_{j-1}$ vanishes on $\partial V_2 \cap K_{j-1}$, and since $\Psi_j$ is positive, we conclude that $\Psi_j > c\varphi_{j-1}$ on $ \partial V_2 \cap V_1 \cap K_{j-1}$ for every positive constant $c>0$. On the other hand, since $\varphi_{j-1}$ positive on $V_2$, we can find a constant $c>0$ so large, that $\Psi_j < c\varphi_{j-1}$ on $ \partial V_1 \cap V_2 \cap K_{j-1}$. Notice that we can shrink $V_1$ and $V_2$ so that $V_1$ and $V_2$ are bounded and $\Psi_j$ and $\varphi_j$ are continuous up to the boundary of $V_1$ and $V_2$. 

By Lemma~\ref{lem-glue-qcvx}, we are now able to glue $\Psi_j$ and $c\varphi_{j-1}$ to a positive $q$-convex function with corners $\Psi_{j-1}$ on some neighborhood $U_{j-1}$ of $K_{j-1}$.

Proceeding in such a manner inductively, we can construct a function $\Psi_1$ which is $q$-convex with corners on some neighborhood $U_1$ of $K_1=K$, which completes the proof of our Main Theorem.

\end{proof}

The Main Theorem implies conditions on the existence of the $q$-nucleus in $q$-convex manifolds.

\begin{rem} If $\Mcal$ is a \emph{$q$-convex manifold}, i.e., if there exists a $q$-convex exhaustion function defined on $\Mcal\setminus L$ for some compact set $L$ in $\Mcal$, then the Main Theorem asserts that $\nuc_q(K)=\emptyset$ for every compact set $K$ in $\Mcal\setminus L$. Therefore, if there exists a compact $K$ in $\Mcal$ such that $\nuc_q(K)\neq \emptyset$, then necessarily $K$ intersects the boundary of $\Mcal\setminus L$ in $\Mcal$. Furthermore, since $\Omega:=\Mcal\setminus L$ is $q$-complete, we have that $L$ itself is $q$-pseudoconcave, and then $\nuc_q(L)=L$ by Proposition~\ref{prop-qnuc-qpsc}.
\end{rem}

\section{The \textit{q}-nucleus for closed sets}

Before we define the $q$-nucleus for closed sets, we need the following monotonicity property of the $q$-nucleus for compact sets. From now on, we always assume that $q\in\{1,\ldots,n-1\}$. 

\begin{lem}\label{lem-monoton-qnuc} Given two compact sets $K$, $L$ in $\Mcal$ with $K \subset L$, we have $\nuc_q(K) \subset \nuc_q(L)$.
\end{lem}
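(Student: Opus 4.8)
The plan is to use the characterization of the $q$-nucleus as the maximal $q$-pseudoconcave subset, established in Proposition~\ref{prop-qnuc-qpsc}, rather than to argue directly with spherical cuts. The cleanest route exploits Corollary~\ref{collect-qpsc-nucleus}, which identifies $\nuc_q(K)$ with the union $\bigcup_{A \in \Pcal_q(K)} A$ of \emph{all} $q$-pseudoconcave subsets of $K$. The key observation is monotonicity of the collections $\Pcal_q$: if $K \subset L$, then every $q$-pseudoconcave set $A$ contained in $K$ is automatically a $q$-pseudoconcave set contained in $L$, so $\Pcal_q(K) \subset \Pcal_q(L)$. Taking unions immediately yields the inclusion of the $q$-nuclei.

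Concretely, first I would dispose of the trivial edge cases. If $L = \Mcal$ is compact, then $\nuc_q(L) = \Mcal$ by part~4 of Definition~\ref{def-q-nucleus}, and the inclusion $\nuc_q(K) \subset \Mcal$ holds for free. So I may assume $L \subsetneq \Mcal$, whence $K \subset L \subsetneq \Mcal$ as well, and both $q$-nuclei are defined via the maximal-$q$-pseudoconcave-subset characterization. Next I would invoke Proposition~\ref{prop-qnuc-qpsc}: the set $\nuc_q(K)$ is itself $q$-pseudoconcave, and it is a subset of $K$, hence a subset of $L$. Thus $\nuc_q(K)$ is a $q$-pseudoconcave subset of $L$. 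But Proposition~\ref{prop-qnuc-qpsc} also asserts that $\nuc_q(L)$ is the \emph{maximal} $q$-pseudoconcave subset of $L$, meaning it contains every $q$-pseudoconcave subset of $L$. Applying this maximality to the $q$-pseudoconcave subset $\nuc_q(K)$ gives precisely $\nuc_q(K) \subset \nuc_q(L)$.

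There is essentially no obstacle here; the lemma is a formal consequence of maximality. The only point requiring a moment of care is the edge case where one or both nuclei are empty: if $\nuc_q(K) = \emptyset$ the inclusion is vacuous, and the maximality argument above still applies verbatim since the empty set is trivially $q$-pseudoconcave and contained in $L$. I would also note, for completeness, that the same conclusion follows directly from the definitional side via Corollary~\ref{collect-qpsc-nucleus} and the inclusion $\Pcal_q(K) \subset \Pcal_q(L)$, which provides an independent and equally short verification. Either formulation gives the result in a couple of lines, so I expect the write-up to be very short.
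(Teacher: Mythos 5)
Your proposal is correct and follows essentially the same route as the paper: invoke Proposition~\ref{prop-qnuc-qpsc} to see that $\nuc_q(K)$ is a $q$-pseudoconcave subset of $K \subset L$, then apply the maximality of $\nuc_q(L)$ (via Corollary~\ref{collect-qpsc-nucleus}) to conclude $\nuc_q(K) \subset \nuc_q(L)$. Your extra attention to the edge cases $L = \Mcal$ and empty nuclei is a small refinement the paper omits, but the core argument is identical.
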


\begin{proof} According to Proposition~\ref{prop-qnuc-qpsc}, the $q$-nucleus of $K$ is a $q$-pseudoconcave set in $K \subset L$. Since, due to Proposition~\ref{collect-qpsc-nucleus}, $\nuc_q(L)$ is maximal and contains all $q$-pseudoconcave sets in $L$, we derive that $\nuc_q(K) \subset \nuc_q(L)$.
\end{proof}

Assume from now on that $\Mcal$ is locally compact. Let $A$ be a closed set in $\Mcal$. Let $\{K_k\}_{k \in \NN}$ and $\{L_m\}_{m \in \NN}$ be sequences of compact sets in $\Mcal$ with $K_k \subset K_{k+1}$ for every $k \in \NN$ and $\Mcal=\bigcup_k K_k$ and $L_m \subset L_{m+1}$ for every $m \in \NN$ and $\Mcal=\bigcup_m L_m$. Then for each $\nu \in \NN$ there exist $m_\nu, k_\nu \in \NN$ such that $K_\nu \subset L_{m_\nu}$ and $L_\nu \subset K_{k_\nu}$. By the monotonicity of the $q$-nucleus (Lemma~\ref{lem-monoton-qnuc}), we obtain
\[
\nuc_q(A \cap K_{\nu}) \subset \nuc_q(A \cap L_{m_\nu}) \qand \nuc_q(A \cap L_\nu) \subset \nuc_q(A \cap K_{k_\nu}).
\]
It follows that
\[
\overline{ \bigcup_{\nu \in \NN} \nuc_q(A \cap K_{\nu})} \ \subset\ \overline{ \bigcup_{\nu \in \NN} \nuc_q(A \cap L_{m_\nu})} \ \subset\ \overline{ \bigcup_{\nu \in \NN} \nuc_q(A \cap L_{\nu})}
\]
\[
 \subset\ \overline{ \bigcup_{\nu \in \NN} \nuc_q(A \cap K_{k_\nu})} \ \subset\ \overline{ \bigcup_{\nu \in \NN} \nuc_q(A \cap K_{\nu})}.
\]
From this, we derive the identity $\D \overline{ \bigcup_{\nu \in \NN} \nuc_q(A \cap K_{\nu})} = \overline{ \bigcup_{\nu \in \NN} \nuc_q(A \cap L_{\nu})}$, i.e. the set $\D \overline{ \bigcup_{\nu \in \NN} \nuc_q(A \cap K_{\nu})}$ does not depend on the choice of the exhaustion  $\{K_\nu\}_{\nu \in \NN}$ of $\Mcal$.

\begin{defn}\label{defn-closed-q-nucleus} Let $\{K_n\}_{n \in \NN}$ be an arbitrary sequence of compact sets $\{K_n\}_{n \in \NN}$ such that $K_k \subset K_{k+1}$ for every $n \in \NN$ and $\Mcal = \bigcup_{k} K_k$. Then we define the \emph{$q$-nucleus for a closed set $A$ in $\Mcal$} by
\[
\nuc_q(A):= \overline{ \bigcup_{k \in \NN} \nuc_q(A \cap K_k)}.
\]
\end{defn}

We give a necessary condition on the existence of $q$-convex functions near closed sets.

\begin{prop}\label{closed} Let $A$ be a closed set in $\Mcal$. If there exists a $q$-convex function with corners in an open neighborhood $U$ on $A$, then $\nuc_q(A)$ is empty.
\end{prop}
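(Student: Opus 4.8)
The plan is to reduce the statement directly to part~1 of the Main Theorem by unwinding the definition of the $q$-nucleus of a closed set. The whole content of the proposition lies in the compact case that has already been established, so the argument will be short.

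First I would fix an exhaustion $\{K_k\}_{k\in\NN}$ of $\Mcal$ by compact sets with $K_k\subset K_{k+1}$ and $\Mcal=\bigcup_k K_k$, as in Definition~\ref{defn-closed-q-nucleus}; recall that, by the discussion preceding that definition, the set $\overline{\bigcup_k \nuc_q(A\cap K_k)}$ is independent of this choice. Hence it suffices to show that each compact slice $A\cap K_k$ has empty $q$-nucleus.

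Next I would observe that $A\cap K_k$ is compact, being the intersection of the closed set $A$ with the compact set $K_k$, and that $A\cap K_k\subset A\subset U$. Consequently the given function $\psi$, which is $q$-convex with corners on $U$, is in particular a $q$-convex function with corners defined on the open neighborhood $U$ of the compact set $A\cap K_k$. Applying part~1 of the Main Theorem to $A\cap K_k$ in place of $K$ then yields $\nuc_q(A\cap K_k)=\emptyset$. Since this holds for every $k$, I conclude
\[
\nuc_q(A)=\overline{\bigcup_{k\in\NN}\nuc_q(A\cap K_k)}=\overline{\emptyset}=\emptyset.
\]

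I do not expect a genuine obstacle: the result is essentially an immediate corollary of the compact Main Theorem, the only real point being that the single neighborhood $U$ serves simultaneously as a witness for \emph{every} slice $A\cap K_k$. The one subtlety deserving a word of care is that each $A\cap K_k$ is a \emph{proper} compact subset of $\Mcal$, so that $\nuc_q(A\cap K_k)$ is defined through the family $\Fcal^{q}_{A\cap K_k}$ rather than set equal to $\Mcal$; this is automatic, however, since by point~3 of Remark~\ref{rem-qvx-fcts} the existence of the $q$-convex function with corners $\psi$ on $U$ already forbids $A\cap K_k=\Mcal$ (which would force $\Mcal$ to be compact and $\psi$ to attain an interior maximum there).
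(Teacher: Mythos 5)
Your proposal is correct and follows essentially the same route as the paper: the paper's proof likewise applies part~1 of the Main Theorem to each compact slice $A \cap K$ (for $K$ compact in $\Mcal$) and concludes that $\nuc_q(A)$, being the closure of the union of these empty nuclei, is empty. Your added remark on properness of $A\cap K_k$ is harmless but redundant, since the Main Theorem's own proof already rules out $K=\Mcal$ via point~3 of Remark~\ref{rem-qvx-fcts}.
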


\begin{proof} If there exists a $q$-convex function with corners on a neighborhood $U$ of $A$, then $\nuc_q(A \cap K)$ is empty for every compact set $K \subset \Mcal$ according to the Main Theorem of this paper. Thus, the $q$-nucleus of $A$ has to be empty.
\end{proof}

We present a rather obvious example in the Stein case.

\begin{ex} If $\Mcal$ is Stein and $A$ is a properly embedded complex submanifold of $\Mcal$, then, since a Stein manifold always admits a smooth strictly plurisubharmonic (i.e. 1-convex) exhaustion function, we can apply Proposition 4.3 with $\Mcal$ on the place of $U$ to conclude that the $q$-nucleus of $A$ is empty. Recall here, that 1-convex function is automatically $q$-convex.
\end{ex}

The next observation follows directly from the definition.

\begin{prop} The $q$-nucleus of a closed set $A \subsetneq \Mcal$ is $q$-pseudoconcave in $\Mcal$.
\end{prop}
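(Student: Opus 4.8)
The plan is to check the $q$-pseudoconcavity of $\nuc_q(A)$ directly against the spherical-hat criterion of Proposition~\ref{prop-qpsc-caps}, which tells us that a closed set is $q$-pseudoconcave exactly when no spherical hat pair $(S,\widehat{S})$ of order $n-q+1$ has its cap $S$ disjoint from the set while its filling $\widehat{S}$ meets it. Fix an exhaustion $\{K_k\}_{k\in\NN}$ as in Definition~\ref{defn-closed-q-nucleus} and set $N_k:=\nuc_q(A\cap K_k)$, so that $\nuc_q(A)=\overline{\bigcup_k N_k}$. By Lemma~\ref{lem-monoton-qnuc} the $N_k$ increase with $k$, by Proposition~\ref{prop-qnuc-qpsc} each $N_k$ is compact and $q$-pseudoconcave, and in particular $N_k\subset\nuc_q(A)$ for every $k$. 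I argue by contradiction: assuming $\nuc_q(A)$ fails to be $q$-pseudoconcave, Proposition~\ref{prop-qpsc-caps} produces a hat pair $(S,\widehat{S})$ of order $n-q+1$, with defining data $r$, $U$ and $\Phi$ as in Definition~\ref{defn-qpsc-hats}, so that $S\cap\nuc_q(A)=\emptyset$ but some point $p$ lies in $\widehat{S}\cap\nuc_q(A)$.

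First I would dispose of the approximants. Because $N_k\subset\nuc_q(A)$ and $S\cap\nuc_q(A)=\emptyset$, the cap misses every $N_k$, so $S\subset\Mcal\setminus N_k$; since $\Mcal\setminus N_k$ is $q$-pseudoconvex, Proposition~\ref{prop-qpsc-caps} yields $\widehat{S}\cap N_k=\emptyset$ for all $k$, and hence $\widehat{S}\cap\bigcup_k N_k=\emptyset$. Consequently $p$ cannot belong to $\bigcup_k N_k$ itself; it is a genuine limit point of that union, and the entire difficulty of the proof is concentrated in upgrading $\widehat{S}\cap\bigcup_k N_k=\emptyset$ to $\widehat{S}\cap\overline{\bigcup_k N_k}=\emptyset$.

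To reach the contradiction I would enlarge the hat so as to swallow the sequence approaching $p$. Write $\Phi^{-1}(p)=(a,b)$ with $a\in\widehat{\SS}_r^{n-q+1}$ and $b\in\Delta^{q-1}$; since $S\cap\nuc_q(A)=\emptyset$ forbids $p$ from sitting on the cap, one has $\|a\|<1$ while $\Repa{a_1}\geq r$. Pick $p_i\to p$ with $p_i\in N_{k_i}$ and $\Phi^{-1}(p_i)=(a_i,b_i)\to(a,b)$. Choosing $\rho<1$ with $\|b\|_\infty<\rho$, the compact set $\Phi(\SS_r^{n-q+1}\times\overline{\Delta_\rho^{q-1}})$ is contained in $S$, hence lies at positive distance from the closed set $\nuc_q(A)$; a nested-compactness argument then lets me choose $r'<r$ close enough to $r$ that the enlarged filled hat $\widehat{\SS}_{r'}^{n-q+1}\times\overline{\Delta_\rho^{q-1}}$ still lies in the domain $U$ of $\Phi$ and that the enlarged cap $\Phi(\SS_{r'}^{n-q+1}\times\overline{\Delta_\rho^{q-1}})$ remains disjoint from $\nuc_q(A)$. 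Because $\|a\|<1$ and $\Repa{a_1}\geq r>r'$, for all large $i$ one has $\|a_i\|<1$, $\Repa{(a_i)_1}>r'$ and $\|b_i\|_\infty<\rho$, so $a_i\in\widehat{\SS}_{r'}^{n-q+1}$ and $b_i\in\Delta_\rho^{q-1}$, i.e. $p_i\in\Phi(\widehat{\SS}_{r'}^{n-q+1}\times\Delta_\rho^{q-1})$. After rescaling $\Delta_\rho^{q-1}$ to $\Delta^{q-1}$ this enlarged pair is again a spherical hat pair of order $n-q+1$ whose cap meets no $N_k$; applying Proposition~\ref{prop-qpsc-caps} to the $q$-pseudoconvex set $\Mcal\setminus N_{k_i}$ gives $p_i\notin N_{k_i}$, contradicting $p_i\in N_{k_i}$.

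The step I expect to be the main obstacle is exactly this enlargement. The naive separation of $S$ from $\nuc_q(A)$ fails because $S$ is not compact, its $\Delta^{q-1}$ factor being open, so there is no uniform distance to exploit. The remedy is to restrict the polydisc factor to the compact $\overline{\Delta_\rho^{q-1}}$, which is legitimate precisely because the offending point $p$ has $\|b\|_\infty<1$; only after this localization do compactness and the continuity of $\Phi$ let the perturbation $r\rightsquigarrow r'$ be carried out while keeping the new cap off $\nuc_q(A)$ and the new filled hat inside $U$. Everything else in the argument is formal, resting on the monotonicity (Lemma~\ref{lem-monoton-qnuc}) and the $q$-pseudoconcavity (Proposition~\ref{prop-qnuc-qpsc}) of the compact nuclei $N_k$.
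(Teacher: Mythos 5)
Your proof is correct, and its skeleton --- write $\nuc_q(A)=\overline{\bigcup_k N_k}$ with $N_k:=\nuc_q(A\cap K_k)$ compact, increasing (Lemma~\ref{lem-monoton-qnuc}) and $q$-pseudoconcave (Proposition~\ref{prop-qnuc-qpsc}), then show that the closure of the union is still $q$-pseudoconcave --- is exactly the paper's. The difference is that the paper disposes of the second step in one phrase, citing without proof the general fact that the closure of a union of arbitrarily many $q$-pseudoconcave sets is again $q$-pseudoconcave, whereas you actually prove it; so your write-up is a legitimate, self-contained substitute for an argument the paper omits. What your proof makes visible is that the entire difficulty you fight --- the enlargement $r\rightsquigarrow r'$, the restriction to $\overline{\Delta_\rho^{q-1}}$, the nested-compactness argument --- is the price of testing $q$-pseudoconcavity with the spherical-hat criterion of Proposition~\ref{prop-qpsc-caps}: the filled hat $\widehat S$ is not open, so a point of $\widehat S\cap\overline{\bigcup_k N_k}$ need not be a limit of points of $\bigcup_k N_k$ lying in $\widehat S$, and you correctly identify and repair this by fattening the hat. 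The same step is immediate if one instead tests with the original Kontinuit\"atssatz definition: if $\Phi(H^{q,n-q})\subset\Mcal\setminus\nuc_q(A)$ while $\Phi(\Delta^n)$ meets $\nuc_q(A)$, then, $\Phi(\Delta^n)$ being \emph{open} (as the image of an injective holomorphic map between manifolds of the same dimension), it already meets some $N_{k_0}$, while $\Phi(H^{q,n-q})$ misses $N_{k_0}\subset\nuc_q(A)$; this contradicts the $q$-pseudoconcavity of the compact nucleus $N_{k_0}$ with no perturbation argument at all. So your route is correct but chooses the harder of the two available tests; the openness of $\Phi(\Delta^n)$ absorbs the closure for free.
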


\begin{proof} Since $\nuc_q(A)= \hbox{closure\ of}\ \bigcup_{n \in \NN} \nuc_q(A \cap K_n)$ and the closure of the union of arbitrary many $q$-pseudoconcave sets is again $q$-pseudoconcave, the $q$-nucleus of $A$ is $q$-pseudoconcave in $A$. 
\end{proof}

An analog of Corollary~\ref{collect-qpsc-nucleus} does not hold for the $q$-nucleus of closed, not necessarily compact sets.

\begin{rem}\label{ex-closed-q-nuc-1} The $q$-nucleus of a closed set $A$ is not necessarily the maximal $q$-pseudoconcave subset of $A$. Indeed, if it would be maximal, then
\[
\bigcup_{P \in \Pcal_q(A)} P = \nuc_q(A),
\]
where $\Pcal_q(A)$ denotes the collection of all $q$-pseudoconcave subsets of $A$. But if we consider the case $A=\CC^n$ in $\Mcal=\CC^n$, the set $A$ can be filled by complex submanifolds of dimension~$q$. Such manifolds are $q$-pseudoconcave  (see Example~\ref{ex-q-pscve-analytic}). Hence, $\bigcup_{P \in \Pcal_q(\Mcal)} P = \CC^n$, whereas $\nuc_q(\CC^n)=\emptyset$ due to the existence of a strictly plurisubmarmonic function on the whole of $\CC^n$, e.g., $\psi(z)=\|z\|^2$.

In fact, the $q$-nucleus of $A$ is maximal with respect to \emph{compact} $q$-pseudoconcave subset of $A$. So if we denote by $\Pcal^c_q(\Mcal)$ the collection of all compact $q$-pseudoconcave subsets of $A$, we have
\[
\bigcup_{A \in \Pcal^c_q(\Mcal)} A = \nuc_q(A).
\]
\end{rem}


We end this section with the following remark.

\begin{rem} If $S:=\nuc_{n-1}(\Mcal)$ is neither empty nor the whole of $\Mcal$, it is a proper $(n-1)$-pseudoconcave set in $\Mcal$. Since $(n-1)$-pseudoconcave means pseudoconcave, the set $\Mcal\setminus S$ is pseudoconvex, and in this sense, $S$ can be considered as an analytic multifunction. 
\end{rem}

\section{Open questions}

The second part of our Main Theorem asserts that if the nucleus $\nuc_q(K)$ of a compact set $K \subset \Mcal$ is empty, then there is a $q$-convex function with corners defined on a neighborhood of $K$. In the case $q=1$ the main result in~\cite{Shc21} gives a better statement, namely, that if $\nuc_1(K) = \emptyset$, then there is a smooth $1$-convex function  in a neighbourhood of $K$. It is well known, due to the results in ~\cite{DF}, that for $q > 1$, in general, functions with corners can not be approximated by smooth $q$-convex functions. This does not mean, however, that in our case such smooth $q$-convex function (maybe obtained by a completely different method) does not exist. This motivates our first question.

\begin{quest1} Let $\Mcal$ be a complex manifold and $K \subset \Mcal$ be a compact set such that for some   $q > 1$ one has $\nuc_q(K) = \emptyset$. Is it always true that there exists a \emph{smooth} $q$-convex function defined in an open neighborhood of $K$? 
\end{quest1} 

Our next question is concerned with the $q$-nucleus for closed sets and it asks if the reverse to the statement of Proposition~\ref{closed} is true or not.

\begin{quest2} Let $q\in\{1,\ldots,n-1\}$ and let $A$ be a closed set in $\Mcal$. Assume that $\nuc_q(A) = \emptyset$. Is it always true that there exists a $q$-convex function with corners defined in an open neighborhood of $A$? 
\end{quest2}

\vspace{3mm}
\noindent
{\bf Acknowledgement.} {\it Part of this work was done while the second author was a visitor at the Southern University of Science and Technology (Shenzhen). It is his pleasure to thank this institution for its hospitality and good working conditions.}

\bibliographystyle{alpha}

\begin{thebibliography}{Tiefe}

\bibitem{Ba} Basener, R.F., Nonlinear Cauchy-Riemann equations and $q$-pseudoconvexity, Duke Math. J. \textbf{43} (1), 203--213 (1976)

\bibitem{DF} 
Diederich, K., Forn{\ae}ss, J.--E., Smoothing {$q$}-convex functions and vanishing theorems, Invent. Math. \textbf{82} (2), 291–305 (1985)

\bibitem{Gr} Grauert, H., Une notion de dimension cohomologique dans la th\'eorie des espaces complexes. Bull. Soc. Math. France \textbf{87}, 341--350 (1959)

\bibitem{HST14} 
Harz, T., Shcherbina, N. V., Tomassini, G., On defining functions and cores for unbounded domains, arXiv:1405.2250v3 (2014)

\bibitem{HL} Henkin, G.M., Leiterer, J., Andreotti-Grauert theory by integral formulas. Progress in Mathematics, vol. 74, Birkh\"auser Boston, Inc., Boston (1988)

\bibitem{HM} 
Hunt, L.R., Murray, J.J., {$q$}-plurisubharmonic functions and a generalized {D}irichlet problem, Michigan Math. J. \textbf{25} (3), 299--316 (1978)

\bibitem{Koba} Kobayashi, S., Hyperbolic complex spaces, Springer, Berlin (1998) 

\bibitem{Paw} Pawlaschyk, T., On some classes of $q$-plurisubharmonic functions and $q$-pseudoconcave sets. Dissertation zur Erlangung eines Doktorgrades, Bergische Universit\"at Wuppertal (2015)

\bibitem{PaSh} Pawlaschyk, T, Shcherbina, N. V., Foliation of continuous q-pseudoconcave graphs, Indiana Univ. Math. J. \textbf{71} (4), 1627–1648 (2022)




\bibitem{Ro} Rothstein, W., Zur Theorie der analytischen Mannigfaltigkeiten im Raume von $n$ komplexen Ver\"anderlichen, Math. Ann. \textbf{129}, 96--138 (1955)

\bibitem{Shc21} 
Shcherbina, N. V., On compact subsets possessing strictly plurisubharmonic functions, Izv. Math., \textbf{85} (3), 605–618 (2021)

\bibitem{Sl} S{\l}odkowski, Z., Local maximum property and $q$-plurisubharmonic functions in
uniform algebras, J. Math. Anal. Appl. \textbf{115} (1), 105--130 (1986)




\end{thebibliography}

%
%
%
%
%
%
\vskip1,2cm  
{\sc T. Pawlaschyk: Department of Mathematics, University of Wuppertal --- 42119 Wuppertal, Germany}
  
{\em e-mail address}: {\texttt pawlaschyk@uni-wuppertal.de}

\vskip0,6cm   
{\sc N. Shcherbina: Department of Mathematics, University of Wuppertal --- 42119 Wuppertal, Germany}
  
{\em e-mail address}: {\texttt shcherbina@math.uni-wuppertal.de}

\end{document}